\numberwithin{equation}{section}
\newtheorem{thm}{Theorem}[section]
\newtheorem{lem}[thm]{Lemma}
\newtheorem{conj}[thm]{Conjecture}
\theoremstyle{definition}
\newtheorem{defn}[thm]{Definition}
\newtheorem{exmp}[thm]{Example}
\newtheorem{rem}[thm]{Remark}
\newcommand{\bbC}{\mathbb{C}}
\newcommand{\bbZ}{\mathbb{Z}}
\newcommand{\bP}{\mathbf{P}}
\newcommand{\bQ}{\mathbf{Q}}
\newcommand{\fg}{\mathfrak{g}}
\newcommand{\hg}{\hat{\mathfrak{g}}}
\newcommand{\hsl}{\hat{\mathfrak{sl}}}
\newcommand{\ol}[1]{\overline{#1}}
\title[On Frenkel-Mukhin algorithm]{On Frenkel-Mukhin algorithm \\
for $q$-character of quantum affine algebras}
\author[W.\ Nakai and T.\ Nakanishi]{Wakako Nakai and Tomoki Nakanishi
}
\address{
{\rm Wakako Nakai}\\
Kaiyo Academy\\
Kaiyo-cho, Gamagori, 443-8588, Japan
\\
{\rm Tomoki Nakanishi}\\
Graduate School of Mathematics, Nagoya University\\
Chikusa-ku, Nagoya, 464-8604, Japan}
\dedicatory{Dedicated to Professor Akihiro Tsuchiya}
\begin{document}

\begin{abstract}
The $q$-character is a strong tool
to study  finite-dimensional representations
of quantum affine algebras.
However, the explicit formula of the
$q$-character of a given representation
has not been known so far.
Frenkel and Mukhin proposed the iterative algorithm which generates
the $q$-character of a given irreducible representation
starting from its highest weight monomial.
The algorithm is known to work for various classes of
representations.
In this note, however, we give an example in which the algorithm
fails to generate the $q$-character.

\end{abstract}

\maketitle

\section{Background}

\subsection{Finite-dimensional representations of
quantum affine algebras}
Let $\fg$ be a simple Lie algebra over $\bbC$,
and let $U_q(\hg)$ be the untwisted
quantum affine algebra of $\fg$  by
Drinfeld and Jimbo \cite{D1, D2, J}.

The following
are the most basic facts on
 the finite-dimensional representations  of $U_q(\hg)$,
due to Chari-Pressley \cite{CP1, CP2}:

(i) The isomorphism classes
 of the irreducible  finite-dimensional representations
of $U_q(\hg)$
 are
parametrized by an $n$-tuple of polynomials
of constant term 1,
$\bP = (P_i(u))_{i\in I}$, where $I=\{1,\dots,n\}$
and $n=\mathrm{rank}\, \fg$.
The polynomials $\bP$ are often
 called the {\em Drinfeld polynomials}
because an analogous result for Yangian was obtained 
earlier by Drinfeld \cite{D2}.
\par
(ii) For given Drinfeld polynomials $\bP$,
let $V(\bP)$
denote the corresponding irreducible representation.
For a pair of Drinfeld polynomials $\bP = (P_i(u))_{i\in I}$ 
and $\bQ = (Q_i(u))_{i\in I}$, let $\bP\bQ:=(P_i(u)Q_i(u))_{i\in I}$.
Then, $V(\bP\bQ)$ is a subquotient of $V(\bP)\otimes V(\bQ)$.
\par
(iii)  A representation $V(\bP)$ is called the $i${\rm th}
{\em fundamental representation} and denoted by
 $V_{\omega_i}(a)$ if $P_i(u)= 1-au$ and 
$P_j(u)=1$ for any $j \neq i$.
Suppose that
Drinfeld polynomials $\bP$
are in the form
\begin{equation}
\label{eq:DrinfeldPolynomials}
P_i(u)=\prod_{k=1}^{n_i} (1-a^{(i)}_{k} u).
\end{equation}
Namely, $a^{(i)}_k$ are the inverses of the zeros of $P_i(u)$.
Then, as a consequence of (ii),
$V(\bP)$ is a subquotient of the tensor product
of fundamental representations
$
\bigotimes_{i\in I}
\bigotimes_{k=1}^{n_i}
V_{\omega_i}(a^{(i)}_{k})$.

For $\fg $ of type $A_1$, the structure of $V(\bP)$ for an arbitrary $\bP$
is known \cite{CP1}.
Also, when $\fg$ is simply-laced,
the relation between $V(\bP)$ and the so-called
standard representations
is described by an analogue of the Kazhdan-Lusztig polynomials
\cite{N1}.
So far, no more general results are known
for the  structure of $V(\bP)$.

\subsection{$q$-Character}

To study the structure of $V(\bP)$,
the {\it $q$-character} $\chi_q$
was introduced by Frenkel and Reshetikhin \cite{FR}.
It is an injective ring homomorphism from the Grothendieck
ring of the finite-dimensional representations of $U_q(\hg)$
to the Laurent polynomial ring of infinitely-many variables
$Y_{i,a}$, $i\in I, a\in \bbC^\times$,
\begin{equation}
\chi_q : \mathrm{Rep}\, U_q(\hg) \rightarrow
\bbZ[Y_{i,a}^{\pm1}]_{i\in I; a\in \bbC^\times}.
\end{equation}
The variables $Y_{i,a}$ are regarded as affinizations of 
the formal exponentials $\exp(\omega_i)$
of the fundamental weights $\omega_i$ of $U_q(\fg)$.
By replacing $Y_{i,a}$ with $\exp(\omega_i)$, 
$\chi_q(V(\bP))$ reduces to the underlying $U_q(\fg)$-character
of $V(\bP)$   with respect to the
standard embedding $U_q(\fg) \subset U_q(\hg)$.

There are several equivalent ways to define the $q$-character.

(i) {\it By universal $R$-matrix.}
This is the original definition of \cite{FR}. 
The idea originates from the {\it transfer matrix},
which plays the central role
in  the {\it quantum inverse
scattering method}, or the {\it  Bethe ansatz method}
for integrable spin chains such as the Heisenberg $XXX$ model \cite{TF}.
The $q$-character $\chi_q(V)$ of 
a representation $V$
is defined as a partial trace of the universal
$R$-matrix of $U_q(\hg)$ on $V$.

(ii) {\it By  weight decomposition.}
 It is shown also  in \cite{FR} that
$\chi_q(V)$ is regarded as the formal character of the weight decomposition
of $V$ with respect to
certain elements 
in the Cartan subalgebra in the `second realization'
of $U_q(\hg)$ \cite{D2}.
Hernandez extended this definition of $\chi_q$ to 
the affinizations of the full family of the quantum
Kac-Moody algebras \cite{H2,H4}.

(iii) {\it By quiver varieties.}
When $\fg$ is simply-laced, Nakajima \cite{N1,N2} geometrically
defined a
$t$-analogue of $q$-character  $\chi_{q,t}$
(the {\it $q,t$-character\/})
as the generating function of the Poincar\'e polynomials of
graded quiver varieties.
Then, $\chi_q$ is obtained by $\chi_q=\chi_{q,1}$.
The algorithm of calculating $\chi_{q,t}$ is given
based on the analogue of the Kazhdan-Lusztig  polynomials.

(iv)  {\it By axiom.}
In \cite{N2} the axiom which characterizes  $\chi_{q,t}$  in (iii)
is given. The axiom is further extended for non simply-laced cases
in \cite{H1}.
Then, $\chi_q$ is obtained by $\chi_q=\chi_{q,1}$.

Before the introduction of the $q$-character,
the spectrum of 
the transfer matrix defined by
the trace on a so-called
{\it Kirillov-Reshetikhin (KR)
representation}
 \cite{KR} of $U_q(\hg)$ was extensively studied 
by the  Bethe ansatz method
(\cite{R1,R2,R3,BR,KR,KNS,KNH,KOS,KS,TK}, etc.).
The fundamental representations $V_{\omega_i}(a)$, for example,
are special cases of the KR representations.
Because of Definition (i) above, these results, including
many conjectures, are naturally translated and restudied
in the context of the $q$-character \cite{FR,FM1,CM,KOSY,N3,H3,H6}.
As a result, the $q$-characters of the KR representations are,
not fully, but rather well understood now.

However, beyond the KR representations, not much is known for
the explicit formula of the $q$-character
 except for some partial results and conjectures
(e.g., \cite{H5,NN1,NN2,NN3}).

\subsection{Frenkel-Mukhin algorithm}

We say that
a monomial in $\bbZ[Y_{i,a}^{\pm1}]_{i\in I; a\in \bbC^\times}$
is {\it dominant} if it is a monomial of variables $Y_{i,a}$,
$i\in I, a\in \bbC^\times$, i.e., without
$Y^{-1}_{i,a}$.
Suppose that Drinfeld polynomials $\bP$
are in the form (\ref{eq:DrinfeldPolynomials}).
Then,  $\chi_q(V(\bP))$ contains a dominant monomial
\begin{equation}
\label{eq:hwm}
m_+ =\prod_{i \in I}
\prod_{k=1}^{n_i} Y_{i,a_k^{(i)}}
\end{equation}
called the {\it highest weight monomial\/} of $V(\bP)$  \cite{FR}.
Since  $\bP$ and $m_+$ are in one-to-one correspondence,
we parametrize the irreducible 
representations of $U_q(\hg)$ by their highest weight monomials
as  $V(m_+)$, instead of
$V(\bP)$, from now on.

Frenkel and Mukhin \cite{FM1} introduced
the iterative algorithm
 which generates a polynomial, say,  $\chi(m_+)
 \in \bbZ[Y_{i,a}^{\pm1}]_{i\in I; a\in \bbC^\times}$
{}from a  given dominant monomial $m_+$.
We call it the {\it FM algorithm} here.
{\it A priori}, it is not clear whether
the algorithm  does not fail (i.e., it is not halted halfway);
also it is not clear whether it  stops at finitely many steps.
It was conjectured that

\begin{conj}[\cite{FM1}, Conjecture 5.8]
\label{conj:FMconjecture}
For any dominant monomial $m_+$,
the algorithm never fails and stops after finitely many steps.
Moreover, the result $\chi(m_+)$ equals to  $\chi_q(V(m_+))$.
\end{conj}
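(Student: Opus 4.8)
The abstract already signals that Conjecture~\ref{conj:FMconjecture} is \emph{false}, so rather than a proof I would aim for a refutation: the task is to exhibit a single dominant monomial $m_+$ for which the polynomial $\chi(m_+)$ produced by the FM algorithm differs from the genuine $q$-character $\chi_q(V(m_+))$, or for which the algorithm halts inconsistently. A single such counterexample settles the conjecture completely, so the entire burden is to make one explicit and to verify both sides independently.

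The plan is as follows. First I would choose $\fg$ in a classical non-simply-laced type --- type $C_n$ is the natural candidate, since there the $q$-characters are combinatorially accessible yet the modules are far from minuscule --- and pick a concrete $m_+$ whose module $V(m_+)$ I expect to be non-special, i.e.\ whose $q$-character carries a dominant monomial other than $m_+$. The recurrence at the heart of the FM algorithm fixes the coefficient of each new monomial from its $U_{q_i}(\hsl_2)$-restrictions, and this determination is forced as soon as the monomial fails to be $i$-dominant for some $i$; but at an \emph{interior dominant} monomial --- one that is dominant in every direction yet is not the highest weight $m_+$ --- the strings impose no constraint, so the algorithm must fall back on the multiplicity rule tailored to the special case, and that rule need not give the true coefficient. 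I would therefore steer the search toward a module possessing such an interior dominant monomial, the structural source of any failure.

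Second, I would compute the genuine $\chi_q(V(m_+))$ by a route wholly independent of the FM recurrence. The most reliable is the tableaux-sum expression for $q$-characters of classical types developed in \cite{NN1,NN2,NN3}, cross-checked against the defining axioms of \cite{N2,H1} and against the inclusion $V(\bP\bQ) \hookrightarrow V(\bP) \otimes V(\bQ)$ of fact~(ii) applied to a tensor product of fundamental representations; two independent derivations agreeing is what would make the counterexample trustworthy. In parallel I would run the FM algorithm from $m_+$, by hand or by computer, recording in particular the multiplicity it assigns at the suspect interior dominant monomial, and then compare the two polynomials monomial by monomial.

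The main obstacle will not be the FM run but the independent evaluation of $\chi_q(V(m_+))$: the algorithm is only \emph{known} to be correct for special modules, so the very property that makes $V(m_+)$ a candidate counterexample is the property that deprives me of the FM output as a cross-check. Everything therefore rests on the validity of the tableaux formulas in the relevant range and on pinning down the single monomial at which the coefficients diverge; once that monomial and its correct multiplicity are confirmed against the honest $q$-character, Conjecture~\ref{conj:FMconjecture} is refuted with no further work.
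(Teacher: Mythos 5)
Your instinct to refute rather than prove is right, and so is your diagnosis of where to look: type $C_n$, a non-special module, and an ``extra'' dominant monomial that no $\hsl_2$-string constraint forces the algorithm to produce. Indeed the paper's counterexample is $\fg$ of type $C_3$ with $m_+ = Y_{1,q^{4}}Y_{2,q}Y_{3,q^{-2}}$ as in (\ref{eq:mp3}), and the monomial the algorithm misses, $m_5 = Y_{2,q^{-1}}Y_{2,q}$ of (\ref{eq:m5}), is exactly such an interior dominant monomial. But your proposed method of verification --- compute $\chi_q(V(m_+))$ by an independent route and compare it with the FM output --- has a gap that you yourself identify and do not close: for a non-special module of type $C_n$ there is no rigorously established independent formula. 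The tableaux expressions of \cite{NN1,NN2,NN3} are only conjectural in type $C_n$ (the paper even withdraws one such claim in a remark), and the axioms of \cite{N2,H1} do not by themselves yield an explicit computation of an $896$-dimensional character. The circularity you worry about is therefore fatal to this route as written.

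The paper avoids the comparison entirely. It proves that the algorithm \emph{fails} in the precise technical sense of Step 2(i) of Definition \ref{defn:FMalgorithm}: it reaches a non-admissible monomial and halts halfway, so there is no completed output $\chi(m_+)$ to compare with anything. The argument is purely internal to the algorithm's bookkeeping: (i) the monomial $m_4$ of (\ref{eq:m4}), which is not $2$-dominant, does occur in $\chi$ at some step; (ii) its second coloring can be saturated only if $m_4$ arises in the $2$-expansion of a $2$-dominant monomial of higher weight, and the algebraic independence of the $A_{i,a}$ leaves only the candidates $A_{2,q^2}m_4=m_5$ and $A_{2,q^2}^2 m_4$, of which only $m_5$ actually produces $m_4$ in its $2$-expansion; (iii) an exhaustive check of the six possible expansion routes from $m_+$ shows that $m_5$ is never generated. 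Hence $m_4$ is non-admissible when its weight is processed, proving Theorem \ref{thm:C3}. This is the idea your proposal is missing: the failure can be certified by tracking admissibility and colorings alone, with no independent knowledge of the true $q$-character required.
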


The algorithm is fairly practical so that,
assuming the conjecture,
one can explicitly calculate
the $q$-characters of representations,
by hand, or by computer,
when the dimensions are small.

Conjecture \ref{conj:FMconjecture}
 is partially proved by \cite{FM1} as we shall  explain now.
 We say a representation $V(m_+)$ is {\it special\/} if
its highest weight monomial
$m_+$ is the unique dominant monomial occurring in $\chi_q(V(m_+))$.
For example,
the fundamental representations are special \cite{FM1}.
More generally, the KR representations are special \cite{N3,H3,H6}.
(See \cite{H5} for further examples of special
representations.)
\begin{thm}[\cite{FM1}, Theorem 5.9]
If $V(m_+)$ is special, then Conjecture \ref{conj:FMconjecture}
is true.
\end{thm}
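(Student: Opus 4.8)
The plan is to run the FM algorithm on $m_+$ and compare its output, step by step, with the genuine $q$-character $\chi_q(V(m_+))=\sum_m t(m)\,m$, showing that the algorithm reproduces the multiplicities $t(m)$ exactly, never stalls, and halts. The essential input is the structural result of Frenkel and Reshetikhin \cite{FR}: for every $i\in I$, the $q$-character of any finite-dimensional module, viewed as a Laurent polynomial in the variables $Y_{i,a}$ with the remaining $Y_{j,b}$ ($j\neq i$) treated as coefficients, is a nonnegative integer combination of $\hsl_2$ $q$-characters in the $i$-direction (the ``$i$-strings''). Since the $\hsl_2$ case is completely understood, each such $i$-string is explicitly determined by its $i$-dominant top monomial. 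This is exactly the data the algorithm manipulates, so I would first record the precise triangular relation: for an $i$-dominant monomial $m'$, its string coefficient $c_i(m')$ is expressible through $t(m')$ and the coefficients $c_i(m'')$ of strictly higher $i$-dominant monomials $m''>m'$.

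Next I would analyze a single determination step. Ordering monomials by the partial order in which $m\le m_+$ means $m_+/m$ is a product of the simple-root monomials $A_{i,a}$ (refined to a total order), the algorithm assigns $s(m_+)=1$ and then, for $m<m_+$, selects an index $i$ for which $m$ is \emph{not} $i$-dominant and computes $s(m)$ from the $i$-string data already fixed at higher monomials. There are exactly two ways this can break down: (A) the value produced for $s(m)$ is negative, non-integral, or depends on the chosen $i$; and (B) no admissible $i$ exists, i.e.\ $m$ is $i$-dominant for every $i$ and hence is a dominant monomial different from $m_+$. Here is where specialness enters decisively: by hypothesis $m_+$ is the unique dominant monomial of $\chi_q(V(m_+))$, so failure mode (B) can only be triggered by a dominant monomial that is \emph{not} present in the true character, and I must show such a monomial is never produced.

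I would then run a downward induction on the total order to prove $s(m)=t(m)$ for all $m$. The base case is $s(m_+)=1=t(m_+)$. Assuming $s(m')=t(m')$ for all $m'>m$, the string coefficients $c_i(m')$ computed by the algorithm coincide with the true ones by the triangular relation of the first paragraph; since $m$ is not $i$-dominant it cannot head any $i$-string and therefore appears only inside strings descending from $i$-dominant $m'>m$, giving
\[
s(m)=\sum_{m'>m,\ m'\ i\text{-dominant}} c_i(m')\,[\,i\text{-string of }m':m\,]=t(m).
\]
In particular $s(m)=t(m)\ge 0$ is a well-defined nonnegative integer, ruling out (A), and monomials absent from $\chi_q(V(m_+))$ receive $s(m)=0$, so no spurious dominant monomial is ever created and (B) never occurs. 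Because $\chi_q(V(m_+))$ has finitely many monomials, the process terminates, and the output is $\sum_m t(m)\,m=\chi_q(V(m_+))$, which is Conjecture \ref{conj:FMconjecture} for $V(m_+)$.

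The main obstacle is the well-definedness asserted in mode (A): a priori the quantity $s(m)$ depends on the auxiliary index $i$, and one must know that all choices agree and yield a genuine nonnegative multiplicity. My resolution routes this entirely through the existence of the true $q$-character: because $\chi_q(V(m_+))$ is \emph{simultaneously} consistent in all directions $i$, the induction forces the algorithm's choice-dependent value to equal the choice-independent $t(m)$. The conceptual role of specialness is narrower but indispensable --- it is precisely what excludes failure mode (B) by guaranteeing that the only dominant monomial the algorithm can meet is the starting monomial $m_+$, so that an admissible index $i$ is available at every nontrivial step. It is the breakdown of exactly this point, when a second dominant monomial appears below $m_+$, that the remainder of the paper exploits to make the algorithm fail.
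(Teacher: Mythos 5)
This theorem is quoted by the paper from \cite{FM1} (Theorem 5.9) without any proof being given here, so there is nothing in-paper to compare against; your reconstruction is sound and follows the same route as the original Frenkel--Mukhin argument, namely downward induction on the $U_q(\fg)$-weight using the decomposition of $\chi_q(V(m_+))$ in each direction $i$ into $U_{q_i}(\hsl_2)$-characters headed by $i$-dominant monomials, with specialness invoked precisely to exclude a second dominant monomial. The one point worth flagging is that under Definition \ref{defn:FMalgorithm} a dominant monomial is automatically admissible, so your failure mode (B) would surface not as an immediate halt at that monomial but as non-admissibility of monomials lower down its $i$-strings (exactly the mechanism of the $C_3$ counterexample, where the missed dominant monomial $m_5$ causes the failure at $m_4$); your induction, which shows that no dominant monomial other than $m_+$ is ever generated and that every generated monomial receives its true coefficient, does cover this.
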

In particular,
the FM algorithm is applicable 
to the fundamental representations and the KR representations,
and provides the aforementioned results for their
$q$-characters.
We note that
there are also many  {\it nonspecial} representations
for which Conjecture \ref{conj:FMconjecture} is true;
{\it e.g.}, $\fg$ of type $A_2$ with $m_+ = Y^2_{1,1}Y_{1,q^2}$,
where $V(m_+)\simeq V(Y_{1,1})\otimes V(Y_{1,1}Y_{1,q^2})$.

The purpose of this note is to give a counterexample
of Conjecture \ref{conj:FMconjecture}.
More precisely, it is an example where the algorithm
{\it fails\/} in the sense of \cite{FM1} (see
Definition \ref{defn:FMalgorithm}).

In Section 2 the FM algorithm is recalled.
In Section 3, as a warmup,
we give two examples in which
the algorithm works well.
Then, a counterexample is given in Section 4.
Taking this opportunity, we also demonstrate
 the synthesis of the FM algorithm and
Young tableaux in
\cite{BR,KOS,KS,NT,NN1,NN2,NN3}
by these examples.

We thank D.\ Hernandez for his helpful comments.

\section{FM algorithm}

Here we recall the FM algorithm.
The presentation here is minimal to describe the
counterexample in Section 4.
We faithfully follow  \cite[Section 5.5]{FM1},
so that the reader is asked
to consult  it for more details.

\subsection{Preliminary: $q$-character of $U_q(\hsl_2)$}
The FM algorithm is based on the explicit formula
of the $q$-characters of
the irreducible representations of $U_q(\hsl_2)$ \cite{CP1,FR}.

\begin{exmp}
Let $W_r(a)$ be the irreducible representation $U_q(\hsl_2)$
with highest weight monomial
\begin{align}
\label{eq:hwmW}
m_+= \prod_{k=1}^r Y_{aq^{r-2k+1}},
\end{align}
where we set $Y_{a}:=Y_{1,a}$.
Then, its $q$-character is given by
\begin{align}
\label{eq:qchW}
\chi_q(W_r(a)) = 
m_+
\sum_{i=0}^r \prod_{j=1}^i
A^{-1}_{aq^{r-2j+2}},
\quad
A_{a}:= Y_{aq^{-1}} Y_{aq}.
\end{align}
\end{exmp}

Generally, the $q$-character of any irreducible
 representation of $U_q(\hsl_2)$ is given by  a product
of (\ref{eq:qchW}) as follows \cite{CP1}:
Let 
$\Sigma_{a,r}$
be the set of the indices of the
variables $Y_{b}$ in (\ref{eq:hwmW}),
i.e., $\Sigma_{a,r}=\{ aq^{r-2k+1}\}_{k=1,\dots,r}$.
We call it a {\it $q$-string}.
We say that two $q$-strings
 $\Sigma_{a,r}$ and $\Sigma_{a',r'}$ are
in {\it general position\/} if either 
(i) the union $\Sigma_{a,r} \cup\Sigma_{a',r'}$ is not a
$q$-string, or (ii)
$\Sigma_{a,r} \subset \Sigma_{a',r'}$ or
$\Sigma_{a',r'} \subset\Sigma_{a,r}$.
Then,

\begin{exmp}
Let
$m_+ \in  \bbZ[Y^{\pm1}_{a}]_{a\in \bbC^\times}$
be a given dominant monomial.
Then, 
one can uniquely (up to permutations) factorize
$m_+$ as
\begin{align}
m_+=\prod_{i=1}^k
\left(\prod_{b\in \Sigma_{a_i,r_i}} Y_{b}\right),
\end{align}
where $\Sigma_{a_1,r_1}$,\dots, $\Sigma_{a_k,r_k}$
are $q$-strings which are pairwise in general
position.
The $q$-character of $V(m_+)$ is given by
\begin{align}
\label{eq:chgeneral}
\chi_q(V(m_+)) = \prod_{i=1}^k \chi_q(W_{r_i}(a_i)).
\end{align}

\end{exmp}

\subsection{Algorithm}
Let us start from some key definitions.
\begin{defn}
(i) We say that
a monomial $m\in \bbZ[Y_{i,a}^{\pm1}]_{i\in I; a\in \bbC^\times}$
is  {\it $i$-dominant} if it does not contain
variables $Y^{-1}_{i,a}$,
$a\in \bbC^\times$.
\par
(ii)
For a polynomial $\chi \in \bbZ[Y_{i,a}^{\pm1}]_{i\in I; a\in \bbC^\times}$
and a monomial $m$ occurring in $\chi$ with coefficient $s$,
a {\it coloring\/} of $m$ is a set of integers
$\{s_i\}_{i\in I}$ such that
$0\leq s_i\leq s$.
We say that
a polynomial $\chi$ is {\it colored\/} if all monomials occurring in $\chi$
have colorings.
\par
(iii)
Let $\bbZ[Y_{i,a}^{\pm1}]_{i\in I; a\in \bbC^\times}$
be a colored polynomial,
and let $m$ be a monomial occurring in $\chi$ with coefficient
$s\in \bbZ_{\geq 0}$ and coloring $\{s_i\}_{i\in I}$.
We say that $m$ is  {\it admissible\/} if,
for any $i\in I$ such that $s_i< s$, $m$ is $i$-dominant.
\end{defn}
Let
\begin{align}
\label{eq:Amonomial}
\begin{split}
A_{i,a} &= Y_{i,aq_i^{-1}}Y_{i,aq_i}
\prod_{j:C_{ji}=-1} Y^{-1}_{j,a}\\
&\qquad
\times\prod_{j:C_{ji}=-2} Y^{-1}_{j,aq^{-1}}Y^{-1}_{j,aq}
\prod_{j:C_{ji}=-3} Y^{-1}_{j,aq^{-2}}Y^{-1}_{j,a}Y^{-1}_{j,aq^2},
\end{split}
\end{align}
where $C_{ij}=2(\alpha_i,\alpha_j)/(\alpha_i,\alpha_i)$
 is the Cartan matrix of $\fg$.
The monomials $A_{i,a}$ are 
regarded as affinizations of 
the formal exponentials $\exp(\alpha_i)$
of the simple roots $\alpha_i$ of $U_q(\fg)$.

The FM algorithm is an iterative algorithm,
and its main routine utilizes
the  following procedure called the {\it $i$-expansion}:
\begin{defn}
\label{defn:i-expansion}
Let $i\in I$,
$\chi$ be a colored polynomial,
and $m$ be an admissible monomial occurring in $\chi$
with coefficient $s$ and coloring $\{s_j\}_{j\in I}$.
Then, a new colored polynomial $i_m(\chi)$,
called the {\it $i$-expansion 
of $\chi$ with respect to $m$},
is defined as follows:
\par
(i) If $s_i=s$, then $i_m(\chi)=\chi$.
\par
(ii) If $s_i< s$, we define $i_m(\chi)$
in the following two steps.

First, we obtain a
colored polynomial $\mu$ which depends on 
$m$ and $i$ (but not on $\chi$) as follows:
Let $\overline{m}$ be the $i$th projection of $m$,
i.e., $\overline{Y}^{\pm 1}_{i,a} = Y^{\pm 1}_{a}$
and  $\overline{Y}^{\pm 1}_{j,a} = 1$ for any $j\neq i$.
Let $\chi_{q_i}(V(\overline{m}))
=\overline{m}(1+\sum_p \overline{M}_p)$
be the $q$-character of the irreducible
representation $V(\overline{m})$
of  $U_{q_i}(\hsl_2)$ with highest weight monomial $\overline{m}$,
where $\overline{M}_p$ is a product of $\overline{A}_{i,a}^{-1}$
(see \ (\ref{eq:qchW}), (\ref{eq:chgeneral}),
and (\ref{eq:Amonomial})).
Then, 
\begin{equation}
\label{eq:sl2ch}
\mu = m (1+ \sum_p M_p),
\end{equation}
where $M_p$ is obtained from $\overline{M}_p$ by replacing
all $\overline{A}_{i,a}^{-1}$ by $A_{i,a}^{-1}$
in (\ref{eq:Amonomial}).

Next, we obtain $i_m(\chi)$  by 
adding the monomials occurring in $\mu$ to $\chi$ as follows:
Suppose that a monomial $n$  occurs in $\mu$ with coefficient $t$.
If $n$ does not occur in $\chi$, we add $n$ to $\chi$
with coefficient $t(s-s_i)$ and set its coloring
$\{ s'_j\}_{j\in I}$ as
 $s'_j=0$ for any $j\neq i$ and $s'_i=t(s-s_i)$.
If $n$ occurs in $\chi$ with coefficient $r$ and coloring 
$\{r_j\}_{j\in I}$,
we set the coefficient $s'$ and the coloring
$\{ s'_j\}_{j\in I}$ of $n$ in $i_m(\chi)$
as $s'=\max\{r,r_i+t(s-s_i)\}$, 
$s'_j=r_j$ for any $j\neq i$, and
$s'_i = r_i + t(s-s_i)$.
The coefficients and the colorings of other monomials
occurring in $\chi$ are unchanged in $i_m(\chi)$.
\end{defn} 

Note that the $i$-expansion is defined only if
 $m$ is admissible.

\begin{rem}
\label{rem:saturate}
In (\ref{eq:sl2ch}),
the coefficient of $m$ in $\mu$ is always 1.
Therefore, both
the coefficient and the $i$th coloring of $m$ in $i_m(\chi)$
are $s$ in Definition \ref{defn:i-expansion}.
In other words, the $i$-expansion of $\chi$ with respect to $m$
is designed to saturate the $i$th coloring of $m$ to its coefficient.
\end{rem}

\begin{defn}
(i) 
The {\it $U_q(\fg)$-weight} of a monomial
\begin{equation*}
\prod_{i\in I}
\left(
\prod_{r=1}^{k_i} Y_{i,a_{ir}}
\prod_{s=1}^{l_i} Y^{-1}_{i,b_{is}}
\right)
\end{equation*}
is defined by 
$
\sum_{i\in I}
(k_i-l_i)\omega_i.
$
\par
(ii)  We equip the $U_q(\fg)$-weight lattice $P:=\bigoplus_{i\in I}
\bbZ \omega_i$  with a  partial order such that
   $\lambda \geq \lambda'$ if $\lambda-\lambda' = 
\sum_i a_i \alpha_i$, $a_i \in \bbZ_{\geq 0}$,
and call it the {\it natural partial order} in $P$.
\end{defn}

Now let us define the FM algorithm.
It is an algorithm generating
a colored polynomial $\chi(m_+)
\in \bbZ[Y_{i,a}^{\pm1}]_{i\in I; a\in \bbC^\times}$ from
a given dominant monomial $m_+$.

\begin{defn}[The FM algorithm]
\label{defn:FMalgorithm}
Let $m_+$ be a given dominant monomial
in $ \bbZ[Y_{i,a}^{\pm1}]_{i\in I; a\in \bbC^\times}$,
and $\lambda_+$
 be the $U_q(\fg)$-weight of $m_+$.
Choose any total order in 
 the set $P_{\leq \lambda_+}:=\{\mu \in P \mid
\mu \leq \lambda_+ \}$ such that
it is compatible
with the natural partial order in $P$; then
enumerate the elements
in $P_{\leq \lambda_+}$ as
$\lambda_1=\lambda_+ > \lambda_2 > \lambda_3 > \dots$.

{\it Step 1.} We set the colored polynomial $\chi$
by  $\chi=m_+$ with the $i$th coloring of $m_+$
being $0$ for any $i\in I$.

{\it Step 2.} Repeat the following steps
(i)--(iii) for $\lambda=\lambda_1,
\lambda_2,\lambda_3,\dots$.
\begin{itemize}
\item[(i)]
Let $\chi$ be the colored polynomial obtained in
the previous step.
Let $m_1, \dots, m_t$ be
all the monomials occurring in $\chi$
whose $U_q(\fg)$-weights are $\lambda$.
If there is at least one {\it non-admissible} monomial among them,
then the algorithm halted halfway.
We say that {\it the algorithm fails at $m_i$} if $m_i$ is one of
such non-admissible monomials.

\item[(ii)] Repeat the following for
all $i\in I$ and all $k\in {1,\dots,t}$:
Replace $\chi$ with the $i$-expansion $i_{m_k}(\chi)$ of $\chi$
with respect to $m_k$.
\item[(iii)] If there is no monomial occurring in $\chi$
whose $U_q(\fg)$-weight is less than $\lambda$ in the total
order of $P_{\leq \lambda_+}$,
then set $\chi(m_+)=\chi$ and
the algorithm {\it stops} (i.e., completes).
\end{itemize}
\end{defn}

It follows from  Remark \ref{rem:saturate} that,
if the algorithm successfully stops,
the $i$th coloring of any monomial $m$
occurring in $\chi(m_+)$
equals to the coefficient of $m$
for any $i\in I$.
Thus, once $\chi(m_+)$ is obtained,
one can safely forget the coloring.

\section{Examples}

Let us see how the FM algorithm works in good situations.
This  is  a warmup to understand the `bad situation'
in the next section.

\subsection{Example 1}
Let us consider the case where $\fg$ is of type $A_2$
and the representation $V(m_+)$
has the highest weight monomial
\begin{align}
\label{eq:A2highest}
m_+ = Y_{1,q^2}Y_{2,q^{-1}}.
\end{align}
The $U_q(\fg)$-weight of $m_+$ is $\lambda_+ = \omega_1 + \omega_2$.
It is well known that $V(m_+)$ is an
{\it evaluation representation} of the
adjoint representation $V_{\omega_1 + \omega_2}$
of $U_q(\fg)$.
As a $U_q(\fg)$-representation,
it is  isomorphic to $V_{\omega_1 + \omega_2}$.
It is also known that $V(m_+)$ is special \cite{H5} so
that the FM algorithm is applicable.

We use the following data:
$q_1=q_2=q$,
$\alpha_1  = 2\omega_1 - \omega_2$,
$\alpha_2  = -\omega_1 +2 \omega_2$,
and
\begin{align}
\label{eq:A1}
A^{-1}_{1,a}= Y^{-1}_{1,aq^{-1}}Y^{-1}_{1,aq}Y_{2,a},
\quad
A^{-1}_{2,a}= Y^{-1}_{2,aq^{-1}}Y^{-1}_{2,aq}Y_{1,a}.
\end{align}

Now let us execute the FM algorithm step by step.
We choose a total order in $P_{\leq \lambda_+}$ as
\begin{equation}
\begin{split}
\lambda_1&=\lambda_+,
\quad
\lambda_2=\lambda_+-\alpha_1,
\quad
\lambda_3=\lambda_+-\alpha_2,
\lambda_4=\lambda_+-2\alpha_1,\\
\lambda_5&=\lambda_+-\alpha_1-\alpha_2,
\quad
\lambda_6=\lambda_+-2\alpha_2,
\quad
\lambda_7=\lambda_+-2\alpha_1-\alpha_2,
\\
\lambda_8&=\lambda_+-\alpha_1 - 2\alpha_2,
\quad
\lambda_9=\lambda_+-2\alpha_1 - 2\alpha_2,
\quad\dots,
\end{split}
\end{equation}
where the rest of the order is irrelevant.

{\it Step 1.} Set $\chi = m_+ =  Y_{1,q^2}Y_{2,q^{-1}}$
 with the coloring of $m_+$ being $(0,0)$.

{\it Step 2.} (1) $\lambda = \lambda_1 = \omega_1 + \omega_2$. 

The 1-expansion of $\chi$ with respect to
 $ m_+=Y_{1,q^2}Y_{2,q^{-1}}$ is done as follows:
Since $\overline{ Y_{1,q^2}Y_{2,q^{-1}}} = Y_{q^2}$,
we have
\begin{align*}
\chi_q(V)  &= Y_{q^2}(1 + \overline{A}^{-1}_{1,q^3}),\\
\mu  &= Y_{1,q^2}Y_{2,q^{-1}}(1 + A^{-1}_{1,q^3})
 = Y_{1,q^2}Y_{2,q^{-1}} + Y^{-1}_{1,q^4}Y_{2,q^{-1}}Y_{2,q^{3}},\\
1_{m_+}(\chi)
 &= Y_{1,q^2}Y_{2,q^{-1}} + Y^{-1}_{1,q^4}Y_{2,q^{-1}}Y_{2,q^{3}},\\
  &\qquad (1,0)\hskip35pt (1,0) 
\end{align*}
where $(1,0)$ represents the coloring.
Then, $\chi$ is replaced with $1_{m_+}(\chi)$.

Similarly, the 2-expansion of $\chi$ 
with respect to $m_+$ is calculated as
\begin{align*}
\mu  &= Y_{1,q^2}Y_{2,q^{-1}}(1 + A^{-1}_{2,1})
 = Y_{1,q^2}Y_{2,q^{-1}} +
Y_{1,1}Y_{1,q^{2}}Y^{-1}_{2,q},\\
\chi &= Y_{1,q^2}Y_{2,q^{-1}} + Y^{-1}_{1,q^4}Y_{2,q^{-1}}Y_{2,q^{3}}
+ Y_{1,1}Y_{1,q^{2}}Y^{-1}_{2,q}.\\
  &\qquad (1,1)\hskip35pt (1,0) \hskip55pt (0,1)
\end{align*}

(2) $\lambda = \lambda_2 = -\omega_1 + 2\omega_2$. 
{}From now on, we only write down the nontrivial $i$-expansions,
i.e., the cases where $s_i <s$.

The 2-expansion w.r.t.\ $Y^{-1}_{1,q^4} Y_{2,q^{-1}} Y_{2,q^3}$:
\begin{align*}
\mu & =Y^{-1}_{1,q^4} Y_{2,q^{-1}} Y_{2,q^3}
(1 + A^{-1}_{2,1})(1 + A^{-1}_{2,q^4})\\
& = Y^{-1}_{1,q^4} Y_{2,q^{-1}} Y_{2,q^3}
+Y_{1,1}Y^{-1}_{1,q^4}Y^{-1}_{2,q}Y_{2,q^3}
+ Y_{2,q^{-1}}Y^{-1}_{2,q^5} + Y_{1,1}Y^{-1}_{2,q}Y^{-1}_{2,q^5},\\
\chi &= Y_{1,q^2}Y_{2,q^{-1}} + Y^{-1}_{1,q^4}Y_{2,q^{-1}}Y_{2,q^{3}}
+ Y_{1,1}Y_{1,q^{2}}Y^{-1}_{2,q}
+Y_{1,1}Y^{-1}_{1,q^4}Y^{-1}_{2,q}Y_{2,q^3}
\\
  &\qquad (1,1)\hskip35pt (1,1) \hskip55pt (0,1) \hskip55pt (0,1)\notag\\
&\quad + Y_{2,q^{-1}}Y^{-1}_{2,q^5} + Y_{1,1}Y^{-1}_{2,q}Y^{-1}_{2,q^5}.
\notag\\
  &\quad\qquad (0,1)\hskip35pt (0,1) \notag
\end{align*}

(3) $\lambda = \lambda_3 = 2\omega_1  - \omega_2$. 

The 1-expansion w.r.t.\ $Y_{1,1} Y_{1,q^2} Y^{-1}_{2,q}$:
\begin{align*}
\mu & =Y_{1,1} Y_{1,q^2} Y^{-1}_{2,q}
(1 + A^{-1}_{1,q^3} + A^{-1}_{1,q}A^{-1}_{1,q^3})\\
& = Y_{1,1}Y_{1,q^{2}}Y^{-1}_{2,q}
+ Y_{1,1}Y^{-1}_{1,q^4}Y^{-1}_{2,q}Y_{2,q^3}
+ Y^{-1}_{1,q^2} Y^{-1}_{1,q^4}Y_{2,q^3},\\
\chi &= Y_{1,q^2}Y_{2,q^{-1}} + Y^{-1}_{1,q^4}Y_{2,q^{-1}}Y_{2,q^{3}}
+ Y_{1,1}Y_{1,q^{2}}Y^{-1}_{2,q}
+Y_{1,1}Y^{-1}_{1,q^4}Y^{-1}_{2,q}Y_{2,q^3}
\\
  &\qquad (1,1)\hskip35pt (1,1) \hskip55pt (1,1) \hskip55pt (1,1)\notag\\
&\quad + Y_{2,q^{-1}}Y^{-1}_{2,q^5} + Y_{1,1}Y^{-1}_{2,q}Y^{-1}_{2,q^5}
+ Y^{-1}_{1,q^2} Y^{-1}_{1,q^4}Y_{2,q^3}.
\notag\\
  &\quad\qquad (0,1)\hskip35pt (0,1)  \hskip50pt (1,0)\notag
\end{align*}

(4) $\lambda = \lambda_4 = -3\omega_1  +3 \omega_2$. 
No nontrivial $i$-expansions.

(5) $\lambda = \lambda_5 = 0$.

The 1-expansion w.r.t.\ $Y_{2,q^{-1}} Y^{-1}_{2,q^5}$:
\begin{align*}
\mu &= Y_{2,q^{-1}} Y^{-1}_{2,q^5},\\
\chi &= Y_{1,q^2}Y_{2,q^{-1}} + Y^{-1}_{1,q^4}Y_{2,q^{-1}}Y_{2,q^{3}}
+ Y_{1,1}Y_{1,q^{2}}Y^{-1}_{2,q}
+Y_{1,1}Y^{-1}_{1,q^4}Y^{-1}_{2,q}Y_{2,q^3}
\\
  &\qquad (1,1)\hskip35pt (1,1) \hskip55pt (1,1) \hskip55pt (1,1)\notag\\
&\quad + Y_{2,q^{-1}}Y^{-1}_{2,q^5} + Y_{1,1}Y^{-1}_{2,q}Y^{-1}_{2,q^5}
+ Y^{-1}_{1,q^2} Y^{-1}_{1,q^4}Y_{2,q^3}.
\notag\\
  &\quad\qquad (1,1)\hskip35pt (0,1)  \hskip50pt (1,0)\notag
\end{align*}

(6) $\lambda = \lambda_6 = 3\omega_1  -3 \omega_2$. 
No nontrivial $i$-expansions.

(7) $\lambda = \lambda_7 = -2\omega_1  + \omega_2$. 

The 2-expansion w.r.t.\ $ Y^{-1}_{1,q^2} Y^{-1}_{1,q^4}Y_{2,q^3}$:
\begin{align*}
\mu & =
 Y^{-1}_{1,q^2} Y^{-1}_{1,q^4}Y_{2,q^3}
(1 + A^{-1}_{2,q^4})
 =  Y^{-1}_{1,q^2} Y^{-1}_{1,q^4}Y_{2,q^3}
+ Y^{-1}_{1,q^2} Y^{-1}_{2,q^5},\\
\chi &= Y_{1,q^2}Y_{2,q^{-1}} + Y^{-1}_{1,q^4}Y_{2,q^{-1}}Y_{2,q^{3}}
+ Y_{1,1}Y_{1,q^{2}}Y^{-1}_{2,q}
+Y_{1,1}Y^{-1}_{1,q^4}Y^{-1}_{2,q}Y_{2,q^3}
\\
  &\qquad (1,1)\hskip35pt (1,1) \hskip55pt (1,1) \hskip55pt (1,1)\notag\\
&\quad + Y_{2,q^{-1}}Y^{-1}_{2,q^5} + Y_{1,1}Y^{-1}_{2,q}Y^{-1}_{2,q^5}
+ Y^{-1}_{1,q^2} Y^{-1}_{1,q^4}Y_{2,q^3}
 + Y^{-1}_{1,q^2} Y^{-1}_{2,q^5}.
\notag\\
  &\quad\qquad (1,1)\hskip35pt (0,1)  \hskip50pt (1,1)
\hskip55pt (0,1)\notag
\end{align*}

(8) $\lambda = \lambda_8 = \omega_1  -2 \omega_2$. 

The 1-expansion w.r.t.\ $Y_{1,1}Y^{-1}_{2,q}Y^{-1}_{2,q^5}$:
\begin{align}
\label{eq:chi8}
\begin{split}
\mu & =
Y_{1,1}Y^{-1}_{2,q}Y^{-1}_{2,q^5}
(1 + A^{-1}_{1,q})
 =  Y_{1,1}Y^{-1}_{2,q}Y^{-1}_{2,q^5}
+ Y^{-1}_{1,q^2} Y^{-1}_{2,q^5},\\
\chi &= Y_{1,q^2}Y_{2,q^{-1}} + Y^{-1}_{1,q^4}Y_{2,q^{-1}}Y_{2,q^{3}}
+ Y_{1,1}Y_{1,q^{2}}Y^{-1}_{2,q}
+Y_{1,1}Y^{-1}_{1,q^4}Y^{-1}_{2,q}Y_{2,q^3}
\\
  &\qquad (1,1)\hskip35pt (1,1) \hskip55pt (1,1) \hskip55pt (1,1)\\
&\quad + Y_{2,q^{-1}}Y^{-1}_{2,q^5} + Y_{1,1}Y^{-1}_{2,q}Y^{-1}_{2,q^5}
+ Y^{-1}_{1,q^2} Y^{-1}_{1,q^4}Y_{2,q^3}
 + Y^{-1}_{1,q^2} Y^{-1}_{2,q^5}.
\\
  &\quad\qquad (1,1)\hskip35pt (1,1)  \hskip50pt (1,1)
\hskip55pt (1,1)
\end{split}
\end{align}

(9) $\lambda = \lambda_9 = -\omega_1  - \omega_2$. 
There is no nontrivial $i$-expansions;
furthermore, there is no monomial occurring in $\chi$ in (\ref{eq:chi8})
whose $U_q(\fg)$-weight is less than $\lambda_9$
in the total order.
Therefore, we set $\chi(m_+)$ to be $\chi$
in (\ref{eq:chi8}), and the algorithm stops.

Thus, we obtain the $q$-character of $V(m_+)$
as $\chi$ in (\ref{eq:chi8}) by forgetting coloring.

\begin{figure}
\setlength{\unitlength}{0.2mm}
\begin{picture}(250,450)
\put(150,425){\line(0,1){25}}
\multiput(100,400)(25,0){2}{\line(0,1){50}}
\multiput(100,425)(0,25){2}{\line(1,0){50}}
\put(100,400){\line(1,0){25}}
\put(108,432){$1$}
\put(133,432){$1$}
\put(108,407){$2$}
\put(250,325){\line(0,1){25}}
\multiput(200,300)(25,0){2}{\line(0,1){50}}
\multiput(200,325)(0,25){2}{\line(1,0){50}}
\put(200,300){\line(1,0){25}}
\put(208,332){$1$}
\put(233,332){$1$}
\put(208,307){$3$}
\put(50,325){\line(0,1){25}}
\multiput(0,300)(25,0){2}{\line(0,1){50}}
\multiput(0,325)(0,25){2}{\line(1,0){50}}
\put(0,300){\line(1,0){25}}
\put(8,332){$1$}
\put(33,332){$2$}
\put(8,307){$2$}
\put(122,225){\line(0,1){25}}
\multiput(72,200)(25,0){2}{\line(0,1){50}}
\multiput(72,225)(0,25){2}{\line(1,0){50}}
\put(72,200){\line(1,0){25}}
\put(80,232){$1$}
\put(105,232){$3$}
\put(80,207){$2$}
\put(180,225){\line(0,1){25}}
\multiput(130,200)(25,0){2}{\line(0,1){50}}
\multiput(130,225)(0,25){2}{\line(1,0){50}}
\put(130,200){\line(1,0){25}}
\put(138,232){$1$}
\put(163,232){$2$}
\put(138,207){$3$}
\put(50,125){\line(0,1){25}}
\multiput(,100)(25,0){2}{\line(0,1){50}}
\multiput(0,125)(0,25){2}{\line(1,0){50}}
\put(0,100){\line(1,0){25}}
\put(8,132){$2$}
\put(33,132){$2$}
\put(8,107){$3$}
\put(250,125){\line(0,1){25}}
\multiput(200,100)(25,0){2}{\line(0,1){50}}
\multiput(200,125)(0,25){2}{\line(1,0){50}}
\put(200,100){\line(1,0){25}}
\put(208,132){$1$}
\put(233,132){$3$}
\put(208,107){$3$}
\put(150,25){\line(0,1){25}}
\multiput(100,0)(25,0){2}{\line(0,1){50}}
\multiput(100,25)(0,25){2}{\line(1,0){50}}
\put(100,0){\line(1,0){25}}
\put(108,32){$2$}
\put(133,32){$3$}
\put(108,7){$3$}
\put(60,90){\vector(1,-1){30}}
\put(190,90){\vector(-1,-1){30}}
\put(175,190){\vector(1,-2){15}}
\put(115,190){\vector(2,-1){60}}
\put(135,190){\vector(-2,-1){60}}
\put(55,290){\vector(1,-2){15}}
\put(75,290){\vector(2,-1){60}}
\put(195,290){\vector(-1,-2){15}}
\put(160,390){\vector(1,-1){30}}
\put(90,390){\vector(-1,-1){30}}
\put(35,65){$2,q^4$}
\put(185,65){$1,q$}
\put(200,175){$2,q^4$}
\put(130,150){$2,1$}
\put(45,175){$1,q$}
\put(20,265){$2,q^4$}
\put(120,275){$2,1$}
\put(200,265){$1,q^3$}
\put(195,375){$2,1$}
\put(30,375){$1,q^3$}
\put(140,408){$\underline{_{1,2}}$}
\put(40,308){$\underline{_{2}}$}
\put(240,308){$\underline{_{1}}$}
\put(112,208){$\underline{_{1}}$}
\put(40,108){$\underline{_{2}}$}
\put(240,108){$\underline{_{1}}$}
\end{picture}

\caption{ The flow of the FM algorithm by Young
tableaux for Example 1.
The symbol $i,q^k$ at an arrow represents
the action of $A^{-1}_{i,q^k}$.
The suffix $\underline{i}$ at a tableau indicates that
$s_i < s$ when $\chi$ is to be $i$-expanded
with respect to the corresponding monomial.}
\label{fig:A2}
\end{figure}
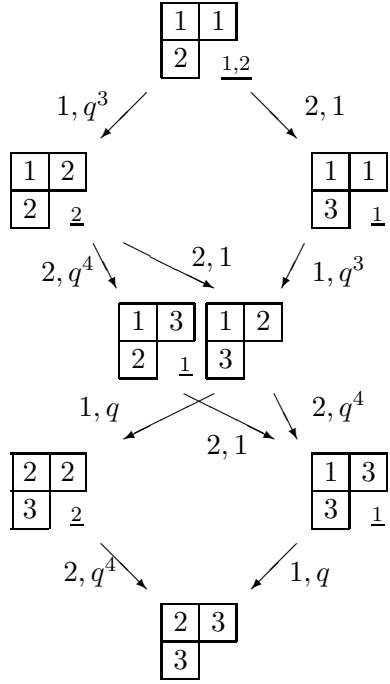

Next, let us introduce a diagrammatic notation
of monomials
by Young tableaux, following \cite{BR,KOS,KS,NT,FR,FM2,NN1}.

To each letter $a=1,2,3$ within a box
of a Young diagram $Y$,
 we assign a monomial as (cf.\ \cite[Section 5.4.1]{FR}) 
\begin{align}
\label{eq:box1}
\begin{split}
&
\raisebox{-4pt}
 {
{\setlength{\unitlength}{0.2mm}
\begin{picture}(25,25)
\multiput(0,0)(25,0){2}{\line(0,1){25}}
\multiput(0,0)(0,25){2}{\line(1,0){25}}
\put(8,7){$1$}
\put(28,-3){$_{ij}$}
\end{picture}
}
}
=
Y_{1,q^{-2i+2j}},
\
\raisebox{-4pt}
 {
{\setlength{\unitlength}{0.2mm}
\begin{picture}(25,25)
\multiput(0,0)(25,0){2}{\line(0,1){25}}
\multiput(0,0)(0,25){2}{\line(1,0){25}}
\put(8,7){$2$}
\put(28,-3){$_{ij}$}
\end{picture}
}
}
=
Y^{-1}_{1,q^{-2i+2j+2}}Y_{2,q^{-2i+2j+1}}
,
\\
&
\raisebox{-4pt}
 {
{\setlength{\unitlength}{0.2mm}
\begin{picture}(25,25)
\multiput(0,0)(25,0){2}{\line(0,1){25}}
\multiput(0,0)(0,25){2}{\line(1,0){25}}
\put(8,7){$3$}
\put(28,-3){$_{ij}$}
\end{picture}
}
}
=
Y^{-1}_{2,q^{-2i+2j+3}},
\end{split}
\end{align}
where the subscription `$ij$' indicates that
the box is located in  the $i$th row and $j$th column of  $Y$.
To each tableau $T$ on $Y$, we assign a monomial $m(T)$
by multiplying the monomials assigned to all the boxes
in $T$.
For example, 
the first and the second monomials in $\chi$ in 
(\ref{eq:chi8}) are represented by tableaux as
\begin{align}
m\left(
\raisebox{-13pt}
 {
{\setlength{\unitlength}{0.2mm}
\begin{picture}(50,50)
\put(50,25){\line(0,1){25}}
\multiput(0,0)(25,0){2}{\line(0,1){50}}
\multiput(0,25)(0,25){2}{\line(1,0){50}}
\put(0,0){\line(1,0){25}}
\put(8,32){$1$}
\put(33,32){$1$}
\put(8,7){$2$}
\end{picture}
}
}
\right)
&=
Y_{1,1} Y_{1,q^2} (Y^{-1}_{1,1}Y_{2,q^{-1}})
= Y_{1,q^2}Y_{2,q^{-1}},\\
m\left(
\raisebox{-13pt}
 {
{\setlength{\unitlength}{0.2mm}
\begin{picture}(50,50)
\put(50,25){\line(0,1){25}}
\multiput(0,0)(25,0){2}{\line(0,1){50}}
\multiput(0,25)(0,25){2}{\line(1,0){50}}
\put(0,0){\line(1,0){25}}
\put(8,32){$1$}
\put(33,32){$2$}
\put(8,7){$2$}
\end{picture}
}
}
\right)
&=
Y_{1,1} (Y^{-1}_{1,q^4}Y_{2,q^{3}})(Y^{-1}_{1,1}Y_{2,q^{-1}})
\\&
= Y^{-1}_{1,q^4}Y_{2,q^{-1}}Y_{2,q^{3}}.\notag
\end{align}
The definition  (\ref{eq:box1}) is designed so that
the following equalities hold \cite[Section 5.4.1]{FR}:
\begin{align}
\label{eq:aaction1}
A^{-1}_{1,q^{-2i+2j+1}}
{\raisebox{-4pt}
 {
{\setlength{\unitlength}{0.2mm}
\begin{picture}(25,25)
\multiput(0,0)(25,0){2}{\line(0,1){25}}
\multiput(0,0)(0,25){2}{\line(1,0){25}}
\put(8,7){$1$}
\put(28,-3){$_{ij}$}
\end{picture}
}
}
}
=
{\raisebox{-4pt}
 {
{\setlength{\unitlength}{0.2mm}
\begin{picture}(25,25)
\multiput(0,0)(25,0){2}{\line(0,1){25}}
\multiput(0,0)(0,25){2}{\line(1,0){25}}
\put(8,7){$2$}
\put(28,-3){$_{ij}$}
\end{picture}
}
}
},
\quad
A^{-1}_{2,q^{-2i+2j+2}}
{\raisebox{-4pt}
 {
{\setlength{\unitlength}{0.2mm}
\begin{picture}(25,25)
\multiput(0,0)(25,0){2}{\line(0,1){25}}
\multiput(0,0)(0,25){2}{\line(1,0){25}}
\put(8,7){$2$}
\put(28,-3){$_{ij}$}
\end{picture}
}
}
}
=
{\raisebox{-4pt}
 {
{\setlength{\unitlength}{0.2mm}
\begin{picture}(25,25)
\multiput(0,0)(25,0){2}{\line(0,1){25}}
\multiput(0,0)(0,25){2}{\line(1,0){25}}
\put(8,7){$3$}
\put(28,-3){$_{ij}$}
\end{picture}
}
}
}.
\end{align}
Namely, the multiplication of $A^{-1}_{i,a}$
is regarded as the `action' of changing
the letter $i$ to $i+1$
in a tableau, if $a$ is appropriately chosen.

With this notation, one can concisely keep track and express
the whole process
of the algorithm presented above  by the
{\it semistandard  tableaux}  of shape $(2,1)$
as in Figure \ref{fig:A2}.
Moreover, as a corollary of Figure \ref{fig:A2},
we obtain the {\it tableaux expression\/} of the $q$-character
\begin{align}
\label{eq:A2tableauxsum}
\chi_q(V(m_+)) = \sum_{T\in \mathrm{SST}(2,1)} m(T),
\end{align}
where $\mathrm{SST}(2,1)$ is the set of all the semistandard
tableaux  of shape $(2,1)$.

\begin{rem}
For $\fg$ of classical type,
similar tableaux expressions  to (\ref{eq:A2tableauxsum}) have been
conjectured and partially
proved for a large class of irreducible representations $V(\lambda/\mu)$
parametrized by skew Young diagrams $\lambda/\mu$
\cite{BR,KOS,KS,FR, FM2,NN1,NN2,NN3,H5}.
More precisely,
there is a tableaux expression for the `Jacobi-Trudi-type
determinant' $\chi(\lambda/\mu)$, which lies in the image of the homomorphism $\chi_q$.
For types $A_n$ and $B_n$,
it is known that
$\chi(\lambda/\mu)=\chi_q(V(\lambda/\mu))$ for any skew Young diagram \cite{H5,H7}.
For type $A_n$, this was also shown
in the context of the character of
Yangian $Y(\mathfrak{gl}_n)$ \cite{NT}.
For types $C_n$ and $D_n$  \cite{NN1,NN2,NN3},
for a (non-skew) Young diagram $\lambda$,
it was conjectured that
$\chi(\lambda)=\chi_q(V(\lambda))$.
In general,
$\chi(\lambda/\mu)$ is conjectured to be
the $q$-character of, not $V(\lambda/\mu)$ itself,
but some representation which has $V(\lambda/\mu)$
as a subquotient.
Using this opportunity, let us withdraw our false claim
for types $C_n$ and $D_n$  in \cite{NN1,NN2,NN3}
that
{\it we expect that $\chi(\lambda/\mu)=\chi_q(V(\lambda/\mu))$,
 if $\lambda/\mu$ is connected}.
A counterexample is given by $\fg$ of type $C_2$ with
$\lambda=(2,2,1), \mu=(1)$.
\end{rem}

\begin{rem}
The underlying $U_q(\fg)$-character of $V(m_+)$
is symmetric  under the Dynkin diagram automorphism
1 $\leftrightarrow$ 2.
However, we see in Figure \ref{fig:A2}
that the $U_q(\hg)$-structure of $V(m_+)$
is not so.
Of course, this is not a contradiction, because
the highest weight monomial $m_+$ in
(\ref{eq:A2highest}) are not symmetric
under the automorphism.
\end{rem}

\subsection{Example 2}
To convince the reader further that the FM algorithm is
well designed,
let us give another, a little more complicated example,
where the coefficients of some monomials
in the $q$-character are greater than one.
We consider the case where $\fg$ is of type $C_2$
and the representation $V(m_+)$
has the highest weight monomial
\begin{align}
\label{eq:C2}
m_+ = Y_{2,q^{-1}}Y_{2,q}.
\end{align}
The $U_q(\fg)$-weight of $m_+$ is $\lambda_+ = 2\omega_2$.
We faithfully follow the convention in \cite{FR,FM1};
in particular, $\alpha_2$ is the long root.

Since any monomial occurring in $\chi_q(V(m_+))$ for (\ref{eq:C2}) should occur
in the product $\chi_q(V(Y_{2,q^{-1}}))\chi_q(V(Y_{2,q}))$, and
\begin{align}
\chi_q(V(Y_{2,q^{-1}}))&=
Y_{2,q^{-1}}+ Y_{1,1}Y_{1,q^2}Y^{-1}_{2,q^3}
+ Y_{1,1}Y^{-1}_{1,q^4}\\
&\qquad+ Y^{-1}_{1,q^2}Y^{-1}_{1,q^4}Y_{2,q}
+Y^{-1}_{2,q^5},\notag\\
\chi_q(V(Y_{2,q}))&=
Y_{2,q}+ Y_{1,q^2}Y_{1,q^4}Y^{-1}_{2,q^5}
+ Y_{1,q^2}Y^{-1}_{1,q^6}\\
&\qquad+ Y^{-1}_{1,q^4}Y^{-1}_{1,q^6}Y_{2,q^3}
+Y^{-1}_{2,q^7},\notag
\end{align}
one can immediately see that $m_+$ is the only possible dominant monomial
in $\chi_q(V(m_+))$.
Thus,  $V(m_+)$ is special, and the FM algorithm is applicable.
It also implies that $V(Y_{2,q^{-1}})\otimes V(Y_{2,q})$
is irreducible and isomorphic to $V(m_+)$.
In particular, as a $U_q(\fg)$-representation, $V(m_+)$ is decomposed as
$V_{\omega_2}\otimes V_{\omega_2}
\simeq V_{2\omega_2} \oplus V_{2\omega_1} \oplus V_{0}$
with dimension $5 \times 5 = 14+10+1 $,
and its $U_q(\fg)$-weight diagram is given in
Figure \ref{fig:weightdiagram}.

\begin{figure}
\begin{picture}(100,100)
\put(55,50){ \vector(1,0){15}}
\put(45,55){ \vector(-1,1){15}}
\put(60,55){$\alpha_1$}
\put(25,60){$\alpha_2$}
\put(60,96){$2\omega_2$}
\put(51,96){$1$}
\put(26,71){$2$}
\put(51,71){$2$}
\put(76,71){$2$}
\put(1,46){$1$}
\put(26,46){$2$}
\put(51,46){$5$}
\put(76,46){$2$}
\put(101,46){$1$}
\put(26,21){$2$}
\put(51,21){$2$}
\put(76,21){$2$}
\put(51,-4){$1$}
\end{picture}
\caption{The $U_q(\fg)$-weight diagram of $V(m_+)$
in Example 2.
The numbers represent the weight multiplicities.}
\label{fig:weightdiagram}
\end{figure}

Keep in mind that (Definition \ref{defn:i-expansion} (ii))
the $i$-expansion should be  done, {\it not with $U_q(\hsl_2)$,
 but with $U_{q_i}(\hsl_2)$}.
Then, the algorithm can be straightforwardly executed with the data:
$q_1=q$, $q_2=q^2$, and
\begin{align}
\label{eq:A2}
A^{-1}_{1,a}= Y^{-1}_{1,aq^{-1}}Y^{-1}_{1,aq}Y_{2,a},
\quad
A^{-1}_{2,a}= Y^{-1}_{2,aq^{-2}}Y^{-1}_{2,aq^2}
Y_{1,aq^{-1}}Y_{1,aq}.
\end{align}

Again, the flow of the algorithm can be expressed with
 tableaux of shape $(2,2)$.
We assign a monomial
to each letter $a=1,2,\overline{2},\overline{1}$ within the box
at position $(i,j)$ as (cf.\ \cite[Section 5.4.3]{FR}).
\begin{align}
\begin{split}
\raisebox{-4pt}
 {
{\setlength{\unitlength}{0.2mm}
\begin{picture}(25,25)
\multiput(0,0)(25,0){2}{\line(0,1){25}}
\multiput(0,0)(0,25){2}{\line(1,0){25}}
\put(8,7){$1$}
\put(28,-3){$_{ij}$}
\end{picture}
}
}
&=
Y_{1,q^{-2i+2j}},
\quad
\raisebox{-4pt}
 {
{\setlength{\unitlength}{0.2mm}
\begin{picture}(25,25)
\multiput(0,0)(25,0){2}{\line(0,1){25}}
\multiput(0,0)(0,25){2}{\line(1,0){25}}
\put(8,7){$\overline{2}$}
\put(28,-3){$_{ij}$}
\end{picture}
}
}
=
Y_{1,q^{-2i+2j + 4}}Y^{-1}_{2,q^{-2i+2j+5}},
\\
\raisebox{-4pt}
 {
{\setlength{\unitlength}{0.2mm}
\begin{picture}(25,25)
\multiput(0,0)(25,0){2}{\line(0,1){25}}
\multiput(0,0)(0,25){2}{\line(1,0){25}}
\put(8,7){$2$}
\put(28,-3){$_{ij}$}
\end{picture}
}
}
&=
Y^{-1}_{1,q^{-2i+2j+2}}Y_{2,q^{-2i+2j+1}},
\quad
\raisebox{-4pt}
{
{\setlength{\unitlength}{0.2mm}
\begin{picture}(25,25)
\multiput(0,0)(25,0){2}{\line(0,1){25}}
\multiput(0,0)(0,25){2}{\line(1,0){25}}
\put(8,7){$\overline{1}$}
\put(28,-3){$_{ij}$}
\end{picture}
}
}
=
Y^{-1}_{1,q^{-2i+2j+6}}.
\end{split}
\end{align}
For example, 
the highest weight monomial $m_+$ is
represented as
\begin{align}
m\left(
\raisebox{-12pt}
 {
{\setlength{\unitlength}{0.2mm}
\begin{picture}(50,50)
\multiput(0,0)(25,0){3}{\line(0,1){50}}
\multiput(0,0)(0,25){3}{\line(1,0){50}}
\put(8,32){$1$}
\put(33,32){$1$}
\put(8,7){$2$}
\put(33,7){$2$}
\end{picture}
}
}
\right)
=
Y_{1,1} Y_{1,q^2} (Y^{-1}_{1,1}Y_{2,q^{-1}})(Y^{-1}_{1,q^2}Y_{2,q})
= Y_{2,q^{-1}}Y_{2,q}.
\end{align}
The `action' of $A^{-1}_{i,a}$ on a box is given by
\begin{align}
\label{eq:aaction2}
{\raisebox{-4pt}
 {
{\setlength{\unitlength}{0.2mm}
\begin{picture}(25,25)
\multiput(0,0)(25,0){2}{\line(0,1){25}}
\multiput(0,0)(0,25){2}{\line(1,0){25}}
\put(8,7){$1$}
\put(28,-3){$_{ij}$}
\end{picture}
}
}
}
\overset{
A^{-1}_{1,q^{-2i+2j+1}}
}{\longrightarrow}
{\raisebox{-4pt}
 {
{\setlength{\unitlength}{0.2mm}
\begin{picture}(25,25)
\multiput(0,0)(25,0){2}{\line(0,1){25}}
\multiput(0,0)(0,25){2}{\line(1,0){25}}
\put(8,7){$2$}
\put(28,-3){$_{ij}$}
\end{picture}
}
}
}
\overset{
A^{-1}_{2,q^{-2i+2j+3}}
}{\longrightarrow}
{\raisebox{-4pt}
 {
{\setlength{\unitlength}{0.2mm}
\begin{picture}(25,25)
\multiput(0,0)(25,0){2}{\line(0,1){25}}
\multiput(0,0)(0,25){2}{\line(1,0){25}}
\put(8,7){$\overline{2}$}
\put(28,-3){$_{ij}$}
\end{picture}
}
}
}
\overset{
A^{-1}_{1,q^{-2i+2j+5}}
}{\longrightarrow}
{\raisebox{-4pt}
 {
{\setlength{\unitlength}{0.2mm}
\begin{picture}(25,25)
\multiput(0,0)(25,0){2}{\line(0,1){25}}
\multiput(0,0)(0,25){2}{\line(1,0){25}}
\put(8,7){$\overline{1}$}
\put(28,-3){$_{ij}$}
\end{picture}
}
}
}.
\end{align}
Then, the flow and the
result of the algorithm is expressed
by tableaux in Figure \ref{fig:C2}.
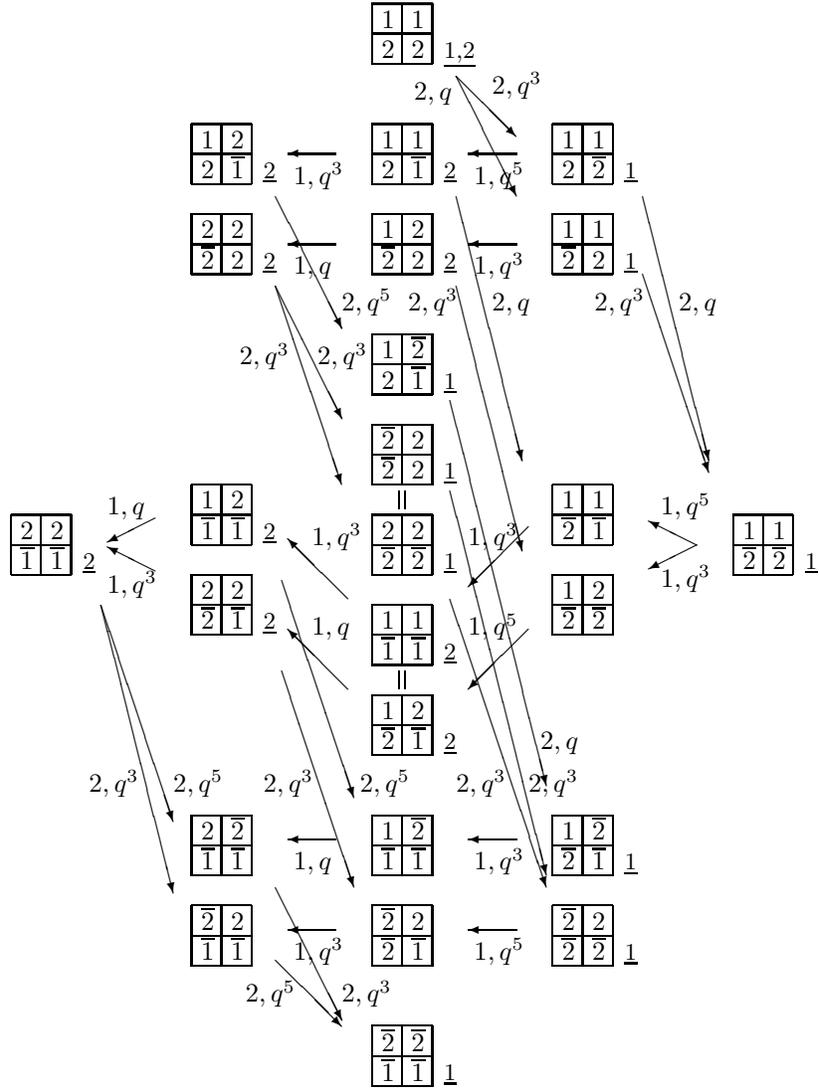
\begin{figure}
\small
\setlength{\unitlength}{0.16mm}
\begin{picture}(650,900)

%
%
\multiput(300,850)(25,0){3}{\line(0,1){50}}
\multiput(300,850)(0,25){3}{\line(1,0){50}}
\put(308,880){$1$}
\put(333,880){$1$}
\put(308,855){$2$}
\put(333,855){$2$}
%
%
\multiput(150,750)(25,0){3}{\line(0,1){50}}
\multiput(150,750)(0,25){3}{\line(1,0){50}}
\put(158,780){$1$}
\put(183,780){$2$}
\put(158,755){$2$}
\put(183,755){$\ol{1}$}
\multiput(150,675)(25,0){3}{\line(0,1){50}}
\multiput(150,675)(0,25){3}{\line(1,0){50}}
\put(158,705){$2$}
\put(183,705){$2$}
\put(158,680){$\ol{2}$}
\put(183,680){$2$}
\multiput(300,750)(25,0){3}{\line(0,1){50}}
\multiput(300,750)(0,25){3}{\line(1,0){50}}
\put(308,780){$1$}
\put(333,780){$1$}
\put(308,755){$2$}
\put(333,755){$\ol{1}$}
\multiput(300,675)(25,0){3}{\line(0,1){50}}
\multiput(300,675)(0,25){3}{\line(1,0){50}}
\put(308,705){$1$}
\put(333,705){$2$}
\put(308,680){$\ol{2}$}
\put(333,680){$2$}
\multiput(450,750)(25,0){3}{\line(0,1){50}}
\multiput(450,750)(0,25){3}{\line(1,0){50}}
\put(458,780){$1$}
\put(483,780){$1$}
\put(458,755){$2$}
\put(483,755){$\ol{2}$}
\multiput(450,675)(25,0){3}{\line(0,1){50}}
\multiput(450,675)(0,25){3}{\line(1,0){50}}
\put(458,705){$1$}
\put(483,705){$1$}
\put(458,680){$\ol{2}$}
\put(483,680){$2$}
%
%
%
\multiput(0,425)(25,0){3}{\line(0,1){50}}
\multiput(0,425)(0,25){3}{\line(1,0){50}}
\put(8,455){$2$}
\put(33,455){$2$}
\put(08,430){$\ol{1}$}
\put(33,430){$\ol{1}$}
%
%
\multiput(150,450)(25,0){3}{\line(0,1){50}}
\multiput(150,450)(0,25){3}{\line(1,0){50}}
\put(158,480){$1$}
\put(183,480){$2$}
\put(158,455){$\ol{1}$}
\put(183,455){$\ol{1}$}
\multiput(150,375)(25,0){3}{\line(0,1){50}}
\multiput(150,375)(0,25){3}{\line(1,0){50}}
\put(158,405){$2$}
\put(183,405){$2$}
\put(158,380){$\ol{2}$}
\put(183,380){$\ol{1}$}
%
\multiput(300,575)(25,0){3}{\line(0,1){50}}
\multiput(300,575)(0,25){3}{\line(1,0){50}}
\put(308,605){$1$}
\put(333,605){$\ol{2}$}
\put(308,580){$2$}
\put(333,580){$\ol{1}$}
\multiput(300,500)(25,0){3}{\line(0,1){50}}
\multiput(300,500)(0,25){3}{\line(1,0){50}}
\put(308,530){$\ol{2}$}
\put(333,530){$2$}
\put(308,505){$\ol{2}$}
\put(333,505){$2$}
\multiput(300,425)(25,0){3}{\line(0,1){50}}
\multiput(300,425)(0,25){3}{\line(1,0){50}}
\put(308,455){$2$}
\put(333,455){$2$}
\put(308,430){$\ol{2}$}
\put(333,430){$\ol{2}$}
\multiput(300,350)(25,0){3}{\line(0,1){50}}
\multiput(300,350)(0,25){3}{\line(1,0){50}}
\put(308,380){$1$}
\put(333,380){$1$}
\put(308,355){$\ol{1}$}
\put(333,355){$\ol{1}$}
\multiput(300,275)(25,0){3}{\line(0,1){50}}
\multiput(300,275)(0,25){3}{\line(1,0){50}}
\put(308,305){$1$}
\put(333,305){$2$}
\put(308,280){$\ol{2}$}
\put(333,280){$\ol{1}$}
%
%
\multiput(450,450)(25,0){3}{\line(0,1){50}}
\multiput(450,450)(0,25){3}{\line(1,0){50}}
\put(458,480){$1$}
\put(483,480){$1$}
\put(458,455){$\ol{2}$}
\put(483,455){$\ol{1}$}
\multiput(450,375)(25,0){3}{\line(0,1){50}}
\multiput(450,375)(0,25){3}{\line(1,0){50}}
\put(458,405){$1$}
\put(483,405){$2$}
\put(458,380){$\ol{2}$}
\put(483,380){$\ol{2}$}
%
%
\multiput(600,425)(25,0){3}{\line(0,1){50}}
\multiput(600,425)(0,25){3}{\line(1,0){50}}
\put(608,455){$1$}
\put(633,455){$1$}
\put(608,430){$\ol{2}$}
\put(633,430){$\ol{2}$}
%
%
\multiput(150,175)(25,0){3}{\line(0,1){50}}
\multiput(150,175)(0,25){3}{\line(1,0){50}}
\put(158,205){$2$}
\put(183,205){$\ol{2}$}
\put(158,180){$\ol{1}$}
\put(183,180){$\ol{1}$}
\multiput(150,100)(25,0){3}{\line(0,1){50}}
\multiput(150,100)(0,25){3}{\line(1,0){50}}
\put(158,130){$\ol{2}$}
\put(183,130){$2$}
\put(158,105){$\ol{1}$}
\put(183,105){$\ol{1}$}
\multiput(300,175)(25,0){3}{\line(0,1){50}}
\multiput(300,175)(0,25){3}{\line(1,0){50}}
\put(308,205){$1$}
\put(333,205){$\ol{2}$}
\put(308,180){$\ol{1}$}
\put(333,180){$\ol{1}$}
\multiput(300,100)(25,0){3}{\line(0,1){50}}
\multiput(300,100)(0,25){3}{\line(1,0){50}}
\put(308,130){$\ol{2}$}
\put(333,130){$2$}
\put(308,105){$\ol{2}$}
\put(333,105){$\ol{1}$}
\multiput(450,175)(25,0){3}{\line(0,1){50}}
\multiput(450,175)(0,25){3}{\line(1,0){50}}
\put(458,205){$1$}
\put(483,205){$\ol{2}$}
\put(458,180){$\ol{2}$}
\put(483,180){$\ol{1}$}
\multiput(450,100)(25,0){3}{\line(0,1){50}}
\multiput(450,100)(0,25){3}{\line(1,0){50}}
\put(458,130){$\ol{2}$}
\put(483,130){$2$}
\put(458,105){$\ol{2}$}
\put(483,105){$\ol{2}$}
%
%
%
\multiput(300,0)(25,0){3}{\line(0,1){50}}
\multiput(300,0)(0,25){3}{\line(1,0){50}}
\put(308,29){$\ol{2}$}
\put(333,29){$\ol{2}$}
\put(308,4){$\ol{1}$}
\put(333,4){$\ol{1}$}
\put(360,855){\footnotesize\underline{1,2}}
\put(210,755){\footnotesize\underline{2}}
\put(210,680){\footnotesize\underline{2}}
\put(360,755){\footnotesize\underline{2}}
\put(360,680){\footnotesize\underline{2}}
\put(510,755){\footnotesize\underline{1}}
\put(510,680){\footnotesize\underline{1}}
\put(60,430){\footnotesize\underline{2}}
\put(210,455){\footnotesize\underline{2}}
\put(210,380){\footnotesize\underline{2}}
\put(360,580){\footnotesize\underline{1}}
\put(360,505){\footnotesize\underline{1}}
\put(360,430){\footnotesize\underline{1}}
\put(360,355){\footnotesize\underline{2}}
\put(360,280){\footnotesize\underline{2}}
\put(660,430){\footnotesize\underline{1}}
\put(510,180){\footnotesize\underline{1}}
\put(510,105){\footnotesize\underline{1}}
\put(360,5){\footnotesize\underline{1}}
\put(322.5,330){\line(0,1){15}}
\put(327.5,330){\line(0,1){15}}
\put(322.5,480){\line(0,1){15}}
\put(327.5,480){\line(0,1){15}}
\put(420,775){\vector(-1,0){40}}
\put(385,750){$1,q^5$}
\put(270,775){\vector(-1,0){40}}
\put(235,750){$1,q^3$}
\put(420,700){\vector(-1,0){40}}
\put(385,675){$1,q^3$}
\put(270,700){\vector(-1,0){40}}
\put(235,675){$1,q$}
\put(570,450){\vector(-2,1){40}}
\put(540,475){$1,q^5$}
\put(570,450){\vector(-2,-1){40}}
\put(540,415){$1,q^3$}
\put(120,472){\vector(-2,-1){40}}
\put( 80,475){$1,q$}
\put(120,428){\vector(-2,1){40}}
\put( 80,410){$1,q^3$}
\put(430,465){\vector(-1,-1){50}}
\put(380,450){$1,q^3$}
\put(430,380){\vector(-1,-1){50}}
\put(380,375){$1,q^5$}
\put(280,405){\vector(-1,1){50}}
\put(250,450){$1,q^3$}
\put(280,330){\vector(-1,1){50}}
\put(250,375){$1,q$}
\put(420,205){\vector(-1,0){40}}
\put(385,180){$1,q^3$}
\put(270,205){\vector(-1,0){40}}
\put(235,180){$1,q$}
\put(420,130){\vector(-1,0){40}}
\put(385,105){$1,q^5$}
\put(270,130){\vector(-1,0){40}}
\put(235,105){$1,q^3$}
\put(370,840){\vector(1,-1){50}}
\put(400,825){$2,q^3$}
\put(370,840){\vector(1,-2){50}}
\put(335,820){$2,q$}
\put(525,740){\vector(1,-4){55}}
\put(555,645){$2,q$}
\put(525,675){\vector(1,-3){55}}
\put(485,645){$2,q^3$}
\put(370,740){\vector(1,-4){55}}
\put(400,645){$2,q$}
\put(370,665){\vector(1,-4){55}}
\put(330,645){$2,q^3$}
\put(220,740){\vector(1,-2){55}}
\put(275,645){$2,q^5$}
\put(220,665){\vector(1,-2){55}}
\put(255,600){$2,q^3$}
\put(220,665){\vector(1,-3){55}}
\put(190,600){$2,q^3$}
\put(365,570){\vector(1,-4){80}}
\put(440,280){$2,q$}
\put(365,495){\vector(1,-4){80}}
\put(430,245){$2,q^3$}
\put(365,405){\vector(1,-3){80}}
\put(370,245){$2,q^3$}
\put(225,420){\vector(1,-3){60}}
\put(290,245){$2,q^5$}
\put(225,345){\vector(1,-3){60}}
\put(210,245){$2,q^3$}
\put(75,400){\vector(1,-3){60}}
\put(135,245){$2,q^5$}
\put(75,400){\vector(1,-4){60}}
\put(65,245){$2,q^3$}
\put(220,165){\vector(1,-2){55}}
\put(275,70){$2,q^3$}
\put(220,105){\vector(1,-1){55}}
\put(195,70){$2,q^5$}
\end{picture}
\caption{ The flow of the FM algorithm by Young
tableaux for Example 2.
The equality between two tableaux means that
they represent the same monomial.
}
\label{fig:C2}
\end{figure}

We note that
two monomials 
occur in $\chi_q(V(m_+))$
with coefficient two,
and, in Figure \ref{fig:C2},
each monomial 
is represented by  two different 
tableaux such as
\begin{align}
\label{eq:indenfityTableaux}
m\left(
\raisebox{-12pt}
 {
{\setlength{\unitlength}{0.2mm}
\begin{picture}(50,50)
\multiput(0,0)(25,0){3}{\line(0,1){50}}
\multiput(0,0)(0,25){3}{\line(1,0){50}}
\put(8,32){$\ol{2}$}
\put(33,32){$2$}
\put(8,7){$\ol{2}$}
\put(33,7){$2$}
\end{picture}
}
}
\right)
=
m\left(
\raisebox{-12pt}
 {
{\setlength{\unitlength}{0.2mm}
\begin{picture}(50,50)
\multiput(0,0)(25,0){3}{\line(0,1){50}}
\multiput(0,0)(0,25){3}{\line(1,0){50}}
\put(8,32){$2$}
\put(33,32){$2$}
\put(8,7){$\ol{2}$}
\put(33,7){$\ol{2}$}
\end{picture}
}
}
\right)
,
\
m\left(
\raisebox{-12pt}
 {
{\setlength{\unitlength}{0.2mm}
\begin{picture}(50,50)
\multiput(0,0)(25,0){3}{\line(0,1){50}}
\multiput(0,0)(0,25){3}{\line(1,0){50}}
\put(8,32){$1$}
\put(33,32){$1$}
\put(8,7){$\ol{1}$}
\put(33,7){$\ol{1}$}
\end{picture}
}
}
\right)
=
m\left(
\raisebox{-12pt}
 {
{\setlength{\unitlength}{0.2mm}
\begin{picture}(50,50)
\multiput(0,0)(25,0){3}{\line(0,1){50}}
\multiput(0,0)(0,25){3}{\line(1,0){50}}
\put(8,32){$1$}
\put(33,32){$2$}
\put(8,7){$\ol{2}$}
\put(33,7){$\ol{1}$}
\end{picture}
}
}
\right).
\end{align}
{}Purely from the point of view of the FM algorithm,
this is redundant,
because the FM algorithm does not
distinguish tableaux if they represents the same
monomial.
However, by doing this,
we have the following {\it tableaux expression\/}
of the $q$-character
\begin{align}
\label{eq:tableauxsum}
\chi_q(V(m_+)) = \sum_{T\in \mathrm{Tab}} m(T),
\end{align}
where $\mathrm{Tab}$ is the set of the tableaux
occurring in Figure \ref{fig:C2}.
Remarkably, the formula (\ref{eq:tableauxsum}) exactly
coincides with the tableaux expression 
in \cite{NN1,NN3} based on the Jacobi-Trudi-type determinant,
thereby showing nice compatibility between two approaches.

\begin{rem}
The implementation of the FM algorithm with tableaux
(or, equivalently, with paths) of \cite{NN1,NN3} demonstrated here,
 can be generalized
to the skew diagram representations of type $C_n$  \cite{NN4}.
See also Remark \ref{rem:concl}.
\end{rem}

\section{Counterexample}

Now we are ready to present an example where
the FM algorithm {\it fails\/} in the sense of
Step 2 (i) of Definition \ref{defn:FMalgorithm}.

We consider the case where $\fg$ is of type $C_3$
and the representation $V(m_+)$ has the highest weight
monomial
\begin{align}
\label{eq:mp3}
m_+ = Y_{1,q^{4}}Y_{2,q}Y_{3,q^{-2}}.
\end{align}
According to \cite[Conjecture 2.2, Theorem A.1]{NN1},
it is expected to
be decomposed into
$V_{\omega_1+\omega_2+\omega_3} \oplus V_{2\omega_1 + \omega_2}
 \oplus V_{2\omega_2}\oplus V_{\omega_1+\omega_3}
\oplus V_{2\omega_1} \oplus V_{\omega_2}$
as a $U_q(\fg)$-representation, 
with dimension $512+189+90+70+21+14 = 896$.
The algorithm is executed with the data:
$q_1=q_2=q$, $q_3=q^2$, and
\begin{align}
\begin{split}
\label{eq:C3}
A^{-1}_{1,a}&= Y^{-1}_{1,aq^{-1}}Y^{-1}_{1,aq}Y_{2,a},
\quad
A^{-1}_{2,a}= Y^{-1}_{2,aq^{-1}}Y^{-1}_{2,aq}
Y_{1,a}Y_{3,a},\\
A^{-1}_{3,a}&= Y^{-1}_{3,aq^{-2}}Y^{-1}_{3,aq^2}
Y_{2,aq^{-1}}Y_{2,aq}.
\end{split}
\end{align}

Again, the process of the algorithm can be expressed by
Young tableaux  of shape $(3,2,1)$.
We assign a monomial
to each letter $a=1,2,3,\overline{3},\overline{2},\overline{1}$ within the box
at position $(i,j)$ as 
\begin{align}
\begin{split}
\label{eq:box3}
\raisebox{-4pt}
 {
{\setlength{\unitlength}{0.2mm}
\begin{picture}(25,25)
\multiput(0,0)(25,0){2}{\line(0,1){25}}
\multiput(0,0)(0,25){2}{\line(1,0){25}}
\put(8,7){$1$}
\put(28,-3){$_{ij}$}
\end{picture}
}
}
&=
Y_{1,q^{-2i+2j}},
\hskip52.5pt
\raisebox{-4pt}
 {
{\setlength{\unitlength}{0.2mm}
\begin{picture}(25,25)
\multiput(0,0)(25,0){2}{\line(0,1){25}}
\multiput(0,0)(0,25){2}{\line(1,0){25}}
\put(8,7){$\overline{3}$}
\put(28,-3){$_{ij}$}
\end{picture}
}
}
=
Y_{2,q^{-2i+2j + 5}}Y^{-1}_{3,q^{-2i+2j+6}},
\\
\raisebox{-4pt}
 {
{\setlength{\unitlength}{0.2mm}
\begin{picture}(25,25)
\multiput(0,0)(25,0){2}{\line(0,1){25}}
\multiput(0,0)(0,25){2}{\line(1,0){25}}
\put(8,7){$2$}
\put(28,-3){$_{ij}$}
\end{picture}
}
}
&=
Y^{-1}_{1,q^{-2i+2j+2}}Y_{2,q^{-2i+2j+1}}
,
\hskip-5pt
\raisebox{-4pt}
 {
{\setlength{\unitlength}{0.2mm}
\begin{picture}(25,25)
\multiput(0,0)(25,0){2}{\line(0,1){25}}
\multiput(0,0)(0,25){2}{\line(1,0){25}}
\put(8,7){$\overline{2}$}
\put(28,-3){$_{ij}$}
\end{picture}
}
}
=
Y_{1,q^{-2i+2j + 6}}Y^{-1}_{2,q^{-2i+2j+7}},
\\
\raisebox{-4pt}
 {
{\setlength{\unitlength}{0.2mm}
\begin{picture}(25,25)
\multiput(0,0)(25,0){2}{\line(0,1){25}}
\multiput(0,0)(0,25){2}{\line(1,0){25}}
\put(8,7){$3$}
\put(28,-3){$_{ij}$}
\end{picture}
}
}
&=
Y^{-1}_{2,q^{-2i+2j+3}}Y_{3,q^{-2i+2j+2}},
\hskip-5pt
\raisebox{-4pt}
{
{\setlength{\unitlength}{0.2mm}
\begin{picture}(25,25)
\multiput(0,0)(25,0){2}{\line(0,1){25}}
\multiput(0,0)(0,25){2}{\line(1,0){25}}
\put(8,7){$\overline{1}$}
\put(28,-3){$_{ij}$}
\end{picture}
}
}
=
Y^{-1}_{1,q^{-2i+2j+8}}.
\end{split}
\end{align}
For example, 
the highest weight monomial $m_+$ is represented as
\begin{align}
\begin{split}
&\ m\left(
\raisebox{-15pt}
 {
{\setlength{\unitlength}{0.16mm}
\begin{picture}(75,75)
\multiput(0,0)(25,0){2}{\line(0,1){75}}
\put(50,25){\line(0,1){50}}
\put(75,50){\line(0,1){25}}
\put(0,0){\line(1,0){25}}
\put(0,25){\line(1,0){50}}
\multiput(0,50)(0,25){2}{\line(1,0){75}}
\put(8,56){$1$}
\put(33,56){$1$}
\put(58,56){$1$}
\put(8,31){$2$}
\put(33,31){$2$}
\put(8,6){$3$}
\end{picture}
}
}
\right)\\
=&
\ Y_{1,1} Y_{1,q^2}Y_{1,q^4}
 (Y^{-1}_{1,1}Y_{2,q^{-1}})(Y^{-1}_{1,q^2}Y_{2,q})
(Y^{-1}_{2,q^{-1}}Y_{3,q^{-2}})\\
=&
\ Y_{1,q^{4}}Y_{2,q}Y_{3,q^{-2}}.
\end{split}
\end{align}
The `action' of $A^{-1}_{i,a}$ on a box is given by
\begin{align}
\label{eq:aaction3}
\begin{split}
{\raisebox{-4pt}
 {
{\setlength{\unitlength}{0.2mm}
\begin{picture}(25,25)
\multiput(0,0)(25,0){2}{\line(0,1){25}}
\multiput(0,0)(0,25){2}{\line(1,0){25}}
\put(8,7){$1$}
\put(28,-3){$_{ij}$}
\end{picture}
}
}
}
\overset{
A^{-1}_{1,q^{-2i+2j+1}}
}{\longrightarrow}
{\raisebox{-4pt}
 {
{\setlength{\unitlength}{0.2mm}
\begin{picture}(25,25)
\multiput(0,0)(25,0){2}{\line(0,1){25}}
\multiput(0,0)(0,25){2}{\line(1,0){25}}
\put(8,7){$2$}
\put(28,-3){$_{ij}$}
\end{picture}
}
}
}
\overset{
A^{-1}_{2,q^{-2i+2j+2}}
}{\longrightarrow}
{\raisebox{-4pt}
 {
{\setlength{\unitlength}{0.2mm}
\begin{picture}(25,25)
\multiput(0,0)(25,0){2}{\line(0,1){25}}
\multiput(0,0)(0,25){2}{\line(1,0){25}}
\put(8,7){$3$}
\put(28,-3){$_{ij}$}
\end{picture}
}
}
}
\overset{
A^{-1}_{3,q^{-2i+2j+4}}
}{\longrightarrow}\\
{\raisebox{-4pt}
 {
{\setlength{\unitlength}{0.2mm}
\begin{picture}(25,25)
\multiput(0,0)(25,0){2}{\line(0,1){25}}
\multiput(0,0)(0,25){2}{\line(1,0){25}}
\put(8,7){$\overline{3}$}
\put(28,-3){$_{ij}$}
\end{picture}
}
}
}
\overset{
A^{-1}_{2,q^{-2i+2j+6}}
}{\longrightarrow}
{\raisebox{-4pt}
 {
{\setlength{\unitlength}{0.2mm}
\begin{picture}(25,25)
\multiput(0,0)(25,0){2}{\line(0,1){25}}
\multiput(0,0)(0,25){2}{\line(1,0){25}}
\put(8,7){$\overline{2}$}
\put(28,-3){$_{ij}$}
\end{picture}
}
}
}
\overset{
A^{-1}_{1,q^{-2i+2j+7}}
}{\longrightarrow}
{\raisebox{-4pt}
 {
{\setlength{\unitlength}{0.2mm}
\begin{picture}(25,25)
\multiput(0,0)(25,0){2}{\line(0,1){25}}
\multiput(0,0)(0,25){2}{\line(1,0){25}}
\put(8,7){$\overline{1}$}
\put(28,-3){$_{ij}$}
\end{picture}
}
}
}.
\end{split}
\end{align}

\begin{thm}
\label{thm:C3}
The FM algorithm fails for $m_+$ in (\ref{eq:mp3}).
\end{thm}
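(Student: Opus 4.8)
The plan is to run the FM algorithm of Definition~\ref{defn:FMalgorithm} explicitly, starting from $m_+$ in (\ref{eq:mp3}) with a fixed total order of $P_{\le\lambda_+}$ compatible with the natural partial order, and to exhibit one weight $\lambda$ at which Step~2(i) meets a non-admissible monomial. As in Examples~1 and~2, I would organize the whole computation through the semistandard tableaux of shape $(3,2,1)$, using the box assignments (\ref{eq:box3}) and reading each $i$-expansion off the letter changes in (\ref{eq:aaction3}) and the data (\ref{eq:C3}); this keeps the combinatorics tractable and identifies each monomial with its tableaux. The data I would actually propagate, however, are not the tableaux but, for every monomial $m$ produced, its coefficient $s$ together with its coloring $\{s_j\}_{j\in I}$, updated through the max-and-add prescription of Definition~\ref{defn:i-expansion}(ii).

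First I would descend weight by weight, performing at each weight all the nontrivial $i$-expansions exactly as in Example~2. Since every $A^{-1}_{i,a}$ strictly lowers the $U_q(\fg)$-weight by $\alpha_i$, an $i$-expansion of a weight-$\lambda$ monomial produces only monomials of weight $<\lambda$; hence all contributions to a monomial of weight $\lambda$ are already in place once every weight above $\lambda$ has been processed, so its coefficient is fixed before Step~2(i) inspects it. I would continue until reaching the first weight $\lambda$ at which some monomial $m$ occurring in $\chi$ satisfies $s_i<s$ for some $i\in I$ while $m$ is \emph{not} $i$-dominant, i.e.\ $m$ contains a factor $Y^{-1}_{i,a}$. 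By the definition of admissibility such an $m$ is non-admissible, so Step~2(i) halts the algorithm at $m$, which is precisely the claim that it fails. Concretely, I expect the obstruction to be a monomial of coefficient $s=2$ that receives its two contributions from genuinely different expansion routes — reflecting that its weight space sits in two distinct $U_q(\fg)$-constituents of the expected decomposition (the spurious $V_{\omega_2}$ and $V_{\omega_2}$-related summands) — so that no single $i$-expansion saturates the coloring in the offending direction $i$, while $m$ nonetheless carries a $Y^{-1}_{i,a}$ there.

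The main obstacle is the faithful bookkeeping of the colorings through the many expansions that feed a single low-lying monomial. Unlike the coefficient, which is settled once the higher weights are done, the $i$-coloring is built up incrementally by the max-and-add rule, with each increment depending on the \emph{excess} $s-s_i$ of the monomial being expanded at the moment of expansion; I must therefore verify that the $i$-coloring $s_i$ of the offending monomial genuinely stays strictly below its coefficient $s$. This is the crux, because Remark~\ref{rem:saturate} guarantees saturation only along the directions actually used to expand $m$, and the whole point of the counterexample is that the $U_{q_i}(\hsl_2)$-local data cannot saturate every direction at once. Having reached and verified one such non-admissible $m$, the algorithm is shown to fail for the chosen total order, which already contradicts the assertion of Conjecture~\ref{conj:FMconjecture} that it \emph{never} fails; as a strengthening I would also check that the configuration rendering $m$ non-admissible is forced independently of the admissible choices made at higher weights, so that the failure does not hinge on the ordering conventions.
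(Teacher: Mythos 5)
Your proposal is a plan for a computation rather than a proof: it never identifies the monomial at which the algorithm halts, and the mechanism you predict is not the one that actually occurs in this example. The paper's argument is targeted. Setting $m_4 := A^{-1}_{1,q^3}A^{-2}_{2,q^2}A^{-1}_{3,1}\, m_+ = Y_{1,q^2}Y_{2,q^{-1}}Y^{-1}_{2,q^3}Y_{3,q^2}$, one first checks by a three-step computation (the $3$-expansion of $m_+$, then the $2$-expansion of $m_1=A^{-1}_{3,1}m_+$, then the $1$-expansion of $m_3=A^{-2}_{2,q^2}A^{-1}_{3,1}m_+$) that $m_4$ occurs in $\chi$. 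Since $m_4$ contains $Y^{-1}_{2,q^3}$ it is not $2$-dominant, so admissibility would force its coloring to satisfy $s_2=s$; but $s_2$ can only be incremented by a $2$-expansion with respect to some $2$-dominant monomial $n$ of higher weight whose expansion produces $m_4$, and by algebraic independence of the $A_{i,a}$ the only candidates are $A_{2,q^2}m_4$ and $A^2_{2,q^2}m_4$, of which only $m_5:=A_{2,q^2}m_4=Y_{2,q^{-1}}Y_{2,q}$ actually regenerates $m_4$. The crux --- and the idea your proposal is missing --- is that $m_5$ is an \emph{extra dominant monomial} of $\chi_q(V(m_+))$ which the algorithm never generates: one enumerates all six orders in which $A^{-1}_{1,q^3}$, $A^{-1}_{2,q^2}$, $A^{-1}_{3,1}$ could be applied to $m_+$ and checks that in each case the relevant $i$-expansion fails to supply the needed factor. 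Hence $s_2(m_4)$ stays strictly below the coefficient, $m_4$ is non-admissible, and the algorithm fails at $m_4$ for any admissible total order.

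By contrast, your heuristic --- that the obstruction should be a coefficient-two monomial fed by two genuinely different routes, with no single $i$-expansion able to saturate the coloring --- does not describe what happens: $m_4$ has coefficient one, and its problem is not competing contributions but a \emph{missing} contribution from above. A blind weight-by-weight run would eventually stumble on the failure, so your strategy is not wrong in principle; but as written it exhibits no non-admissible monomial, and without the observation that the failure is caused by the non-generation of the dominant monomial $m_5$ you have no way to certify --- short of an unstructured computation over the expected $896$ terms --- that the coloring genuinely fails to saturate rather than that you have merely not yet found the expansion that saturates it. The ``order-independence'' check you defer to the end is in fact where the real work lives: it is precisely the six-route argument showing that $m_5$ does not occur in $\chi$ at any step.
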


Let us prove the theorem.
We set
\begin{align}
\label{eq:m1}
m_1 & := A^{-1}_{3,1} m_+
= Y_{1,q^4}(Y_{2,q^{-1}}Y^2_{2,q})Y^{-1}_{3,q^2}
,\\
m_2 & := 
 A^{-1}_{2,q^2}A^{-1}_{3,1} m_+
=(Y_{1,q^2}Y_{1,q^4})(Y_{2,q^{-1}}Y_{2,q}Y^{-1}_{2,q^3})
,\\
\label{eq:m3}
m_3 & := A^{-2}_{2,q^2}A^{-1}_{3,1} m_+
=(Y^2_{1,q^2}Y_{1,q^4})(Y_{2,q^{-1}}Y^{-2}_{2,q^3})
Y_{3,q^2}
,\\
\label{eq:m4}
m_4 & := A^{-1}_{1,q^3}A^{-2}_{2,q^2}A^{-1}_{3,1} m_+
=Y_{1,q^2}(Y_{2,q^{-1}}Y^{-1}_{2,q^3})
Y_{3,q^2}
,
,\\
\label{eq:m5}
m_5 & := A^{-1}_{1,q^3}A^{-1}_{2,q^2}A^{-1}_{3,1} m_+
=Y_{2,q^{-1}}Y_{2,q}
,\\
m_6 & := A^{-1}_{2,q^2} m_+
=(Y_{1,q^2}Y_{1,q^4})(Y_{3,q^{-2}}Y_{3,q^2})
.
\end{align}
We show below that the algorithm fails at $m_4$.
See Figure \ref{fig:C3} 
for the outline of the proof  in 
terms of tableaux.

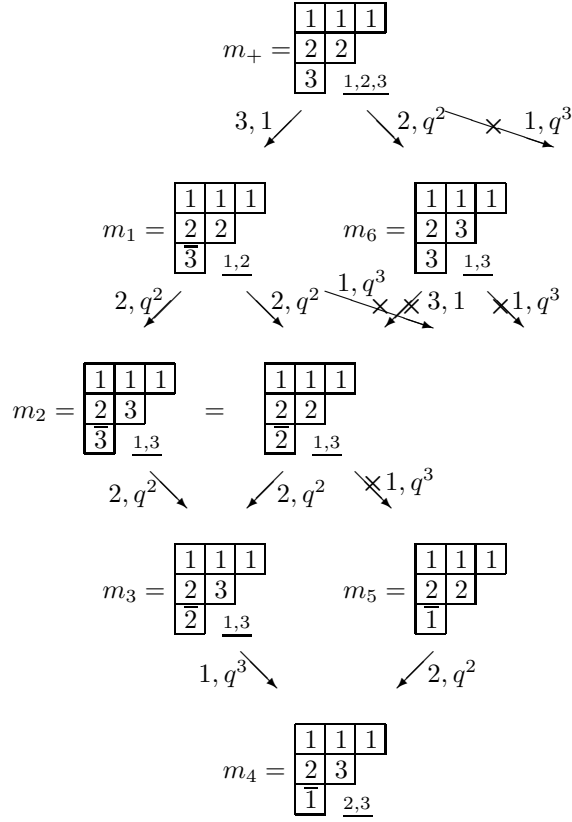
\begin{figure}
\small
\setlength{\unitlength}{0.16mm}
\begin{picture}(450,675)(-10,0)

%
%
\multiput(225,600)(25,0){2}{\line(0,1){75}}
\put(275,625){\line(0,1){50}}
\put(300,650){\line(0,1){25}}
\put(225,600){\line(1,0){25}}
\put(225,625){\line(1,0){50}}
\multiput(225,650)(0,25){2}{\line(1,0){75}}
\put(233,655){$1$}
\put(258,655){$1$}
\put(283,655){$1$}
\put(233,630){$2$}
\put(258,630){$2$}
\put(233,605){$3$}
\put(165,630){$ m_+=$}
%
%
\multiput(125,450)(25,0){2}{\line(0,1){75}}
\put(175,475){\line(0,1){50}}
\put(200,500){\line(0,1){25}}
\put(125,450){\line(1,0){25}}
\put(125,475){\line(1,0){50}}
\multiput(125,500)(0,25){2}{\line(1,0){75}}
\put(133,505){$1$}
\put(158,505){$1$}
\put(183,505){$1$}
\put(133,480){$2$}
\put(158,480){$2$}
\put(133,455){$\ol{3}$}
\put(65,480){$ m_1=$}
\multiput(325,450)(25,0){2}{\line(0,1){75}}
\put(375,475){\line(0,1){50}}
\put(400,500){\line(0,1){25}}
\put(325,450){\line(1,0){25}}
\put(325,475){\line(1,0){50}}
\multiput(325,500)(0,25){2}{\line(1,0){75}}
\put(333,505){$1$}
\put(358,505){$1$}
\put(383,505){$1$}
\put(333,480){$2$}
\put(358,480){$3$}
\put(333,455){$3$}
\put(265,480){$ m_6=$}
%
%
\multiput(50,300)(25,0){2}{\line(0,1){75}}
\put(100,325){\line(0,1){50}}
\put(125,350){\line(0,1){25}}
\put(50,300){\line(1,0){25}}
\put(50,325){\line(1,0){50}}
\multiput(50,350)(0,25){2}{\line(1,0){75}}
\put(58,355){$1$}
\put(83,355){$1$}
\put(108,355){$1$}
\put(58,330){$2$}
\put(83,330){$3$}
\put(58,305){$\ol{3}$}
\put(-10,330){$ m_2=$}
\multiput(200,300)(25,0){2}{\line(0,1){75}}
\put(250,325){\line(0,1){50}}
\put(275,350){\line(0,1){25}}
\put(200,300){\line(1,0){25}}
\put(200,325){\line(1,0){50}}
\multiput(200,350)(0,25){2}{\line(1,0){75}}
\put(208,355){$1$}
\put(233,355){$1$}
\put(258,355){$1$}
\put(208,330){$2$}
\put(233,330){$2$}
\put(208,305){$\ol{2}$}
\put(150,330){$ =$}
%
%
\multiput(125,150)(25,0){2}{\line(0,1){75}}
\put(175,175){\line(0,1){50}}
\put(200,200){\line(0,1){25}}
\put(125,150){\line(1,0){25}}
\put(125,175){\line(1,0){50}}
\multiput(125,200)(0,25){2}{\line(1,0){75}}
\put(133,205){$1$}
\put(158,205){$1$}
\put(183,205){$1$}
\put(133,180){$2$}
\put(158,180){$3$}
\put(133,155){$\ol{2}$}
\put(65,180){$ m_3=$}
\multiput(325,150)(25,0){2}{\line(0,1){75}}
\put(375,175){\line(0,1){50}}
\put(400,200){\line(0,1){25}}
\put(325,150){\line(1,0){25}}
\put(325,175){\line(1,0){50}}
\multiput(325,200)(0,25){2}{\line(1,0){75}}
\put(333,205){$1$}
\put(358,205){$1$}
\put(383,205){$1$}
\put(333,180){$2$}
\put(358,180){$2$}
\put(333,155){$\ol{1}$}
\put(265,180){$ m_5=$}
%
%
\multiput(225,0)(25,0){2}{\line(0,1){75}}
\put(275,25){\line(0,1){50}}
\put(300,50){\line(0,1){25}}
\put(225,0){\line(1,0){25}}
\put(225,25){\line(1,0){50}}
\multiput(225,50)(0,25){2}{\line(1,0){75}}
\put(233,55){$1$}
\put(258,55){$1$}
\put(283,55){$1$}
\put(233,30){$2$}
\put(258,30){$3$}
\put(233,5){$\ol{1}$}
\put(165,30){$ m_4=$}
\put(180,135){\vector(1,-1){30}}
\put(340,135){\vector(-1,-1){30}}
\put(145,110){$1,q^3$}
\put(335,110){$2,q^2$}
\put(105,285){\vector(1,-1){30}}
\put(215,285){\vector(-1,-1){30}}
\put(275,285){\vector(1,-1){30}}
\put(70,260){$2,q^2$}
\put(210,260){$2,q^2$}
\put(300,270){$1,q^3$}
\put(130,435){\vector(-1,-1){30}}
\put(185,435){\vector(1,-1){30}}
\put(330,435){\vector(-1,-1){30}}
\put(385,435){\vector(1,-1){30}}
\put(250,435){\vector(3,-1){90}}
\put(75,420){$2,q^2$}
\put(205,420){$2,q^2$}
\put(405,420){$1,q^3$}
\put(260,435){$1,q^3$}
\put(335,420){$3,1$}
\put(230,585){\vector(-1,-1){30}}
\put(285,585){\vector(1,-1){30}}
\put(350,585){\vector(3,-1){90}}
\put(175,570){$3,1$}
\put(310,570){$2,q^2$}
\put(415,570){$1,q^3$}
\put(280,270){$\boldsymbol\times$}
\put(286,417){$\boldsymbol\times$}
\put(312,417){$\boldsymbol\times$}
\put(386,417){$\boldsymbol\times$}
\put(380,567){$\boldsymbol\times$}
\put(265,608){$\underline{_{1,2,3}}$}
\put(165,458){$\underline{_{1,2}}$}
\put(365,458){$\underline{_{1,3}}$}
\put(90,308){$\underline{_{1,3}}$}
\put(240,308){$\underline{_{1,3}}$}
\put(165,158){$\underline{_{1,3}}$}
\put(265,8){$\underline{_{2,3}}$}

\end{picture}
\caption{The diagram explaining
how the FM algorithm fails for $m_+$
in (\ref{eq:mp3}).}
\label{fig:C3}
\end{figure}

\begin{lem}
The monomial $m_4$ occurs in $\chi$ at some step
in the algorithm.
\end{lem}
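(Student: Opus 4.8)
The plan is to follow the FM algorithm along the left spine $m_+\to m_1\to m_3\to m_4$ of Figure~\ref{fig:C3} and to check that each monomial is genuinely adjoined to $\chi$ by an $i$-expansion of the preceding one. Because multiplication by $A^{-1}_{i,a}$ lowers the $U_q(\fg)$-weight by $\alpha_i$, the weights $\mathrm{wt}(m_+)>\mathrm{wt}(m_1)>\mathrm{wt}(m_3)>\mathrm{wt}(m_4)$ strictly decrease, so in any total order compatible with the natural partial order (as required in Definition~\ref{defn:FMalgorithm}) these four monomials are processed in exactly this order. It therefore suffices to show that when each of $m_+$, $m_1$, $m_3$ is reached it is admissible, and that the corresponding $U_{q_i}(\hsl_2)$-character produces the next monomial of the chain.

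The three $\hsl_2$-computations are routine from \eqref{eq:qchW}--\eqref{eq:chgeneral} together with the $A^{-1}_{i,a}$ of \eqref{eq:C3}. The $3$rd projection of $m_+$ is $Y_{q^{-2}}$, so with $q_3=q^2$ one has $\chi_{q_3}(V(Y_{q^{-2}}))=Y_{q^{-2}}(1+\ol{A}^{-1}_{3,1})$, and the $3$-expansion of $m_+$ adjoins $m_1=A^{-1}_{3,1}m_+$. The $2$nd projection of $m_1$ is $Y_{q^{-1}}Y_q^2$, which factors into the $q$-strings $\Sigma_{1,2}$ and $\Sigma_{q,1}$ (in general position), so its character is $\chi_q(W_2(1))\chi_q(W_1(q))$; expanding this product one sees the monomial $\ol{A}^{-2}_{2,q^2}$ appear, whence the $2$-expansion of $m_1$ adjoins $m_3=A^{-2}_{2,q^2}m_1$ (together with $m_2=A^{-1}_{2,q^2}m_1$). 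Finally the $1$st projection of $m_3$ is $Y_{q^2}^2Y_{q^4}$, factoring into $\Sigma_{q^3,2}$ and $\Sigma_{q^2,1}$, so its character is $\chi_q(W_2(q^3))\chi_q(W_1(q^2))$; the factor $\chi_q(W_1(q^2))=Y_{q^2}(1+\ol{A}^{-1}_{1,q^3})$ contributes the monomial $\ol{A}^{-1}_{1,q^3}$, so the $1$-expansion of $m_3$ adjoins $A^{-1}_{1,q^3}m_3$. A direct cancellation gives $A^{-1}_{1,q^3}m_3=m_4$, which is the monomial we want.

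The step I expect to demand the most care is admissibility, since a single non-admissible monomial of weight above $\mathrm{wt}(m_3)$ would halt the algorithm before $m_4$ is produced. For $m_1$ this is immediate: $m_1$ is not $3$-dominant, but being produced by a $3$-expansion its $3$rd coloring enters already saturated ($s_3=s$, directly from the rule in Definition~\ref{defn:i-expansion} for a newly adjoined monomial; cf.\ Remark~\ref{rem:saturate}), while it is dominant in the other two directions. The delicate case is $m_3$, which is not $2$-dominant and hence admissible only if $s_2(m_3)=s(m_3)$ when its weight is processed; I would prove this by showing that every contribution to the coefficient of $m_3$ arises from a $2$-expansion (of $m_1$ or $m_2$), which keeps the $2$nd coloring saturated. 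In the $\alpha_1$-direction this is automatic, because $\mathrm{wt}(m_3)+\alpha_1=\lambda_+-2\alpha_2-\alpha_3+\alpha_1\not\le\lambda_+$, so no monomial occurring in $\chi$ can $1$-expand into $m_3$ and thus $s_1(m_3)=0$; in the $\alpha_3$-direction one verifies by a finite inspection of the monomials of weight $\lambda_+-2\alpha_2$ that none of their $\hsl_2$-characters contains a term $\ol{A}^{-1}_{3,b}$ carrying it to $m_3$. Granting these checks---exactly the portion of the computation recorded above $m_3$ in Figure~\ref{fig:C3}---the monomial $m_3$ carries coloring $(0,s,0)$, is admissible, and its $1$-expansion executes and produces $m_4$.
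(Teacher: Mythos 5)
Your proof is correct and follows essentially the same route as the paper: the chain $m_+\xrightarrow{3}m_1\xrightarrow{2}m_3\xrightarrow{1}m_4$ via the $\hsl_2$-characters $m_+(1+A^{-1}_{3,1})$, $m_1(1+A^{-1}_{2,q^2}+A^{-1}_{2,1}A^{-1}_{2,q^2})(1+A^{-1}_{2,q^2})$, and $m_3(1+A^{-1}_{1,q^5}+A^{-1}_{1,q^3}A^{-1}_{1,q^5})(1+A^{-1}_{1,q^3})$, exactly as in the paper. Your additional weight-comparison and coloring arguments for the admissibility of $m_1$ and $m_3$ only spell out what the paper asserts by inspection of \eqref{eq:m1} and \eqref{eq:m3}.
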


\begin{proof}
By (\ref{eq:mp3}),
the $3$-expansion of $\chi$ with respect to $m_+$ gives
$
\mu = m_+ ( 1 + A^{-1}_{3,1}),
$
where $\mu$ is the polynomial in (\ref{eq:sl2ch}).
Therefore, $m_1$ occurs in $\chi$ after the expansion.
Next, by (\ref{eq:m1}), $m_1$ is admissible,
and the $2$-expansion of $\chi$ with respect to $m_1$ gives
$
\mu = m_1 ( 1 + A^{-1}_{2,q^2} + A^{-1}_{2,1}A^{-1}_{2,q^2})
(1+A^{-1}_{2,q^2}).
$
Thus, $m_3$ occurs in $\chi$ after the expansion.
Finally, 
by (\ref{eq:m3}), $m_3$ is admissible,
and the $1$-expansion of $\chi$ with respect to $m_3$ gives
$
\mu = m_1 ( 1 + A^{-1}_{1,q^5} + A^{-1}_{1,q^3}A^{-1}_{1,q^5})
(1+A^{-1}_{1,q^3}).
$
In particular, $m_4$ occurs in $\chi$ after the expansion.
\end{proof}

Let $\lambda\, (=\omega_1+\omega_3)$ denote 
the $U_q(\fg)$-weight of $m_4$. 
Let us show that
the monomial $m_4$ is {\it not\/} admissible when 
$\chi$ is going to be expanded  at $\lambda$;
hence, the algorithm fails at $m_4$.
To see it, suppose that $m_4$ is admissible
 when 
$\chi$ is going to be expanded $\chi$  at $\lambda$.
Since $m_4$ is not $2$-dominant,
it should occur in the $2$-expansion
with respect to
 some $2$-dominant monomial, say, $n$   whose $U_q(\fg)$-weight is
greater than $\lambda$.
Since $\{ A_{i,a}\}_{i\in I;a\in \bbC^\times}$
are algebraically independent, 
$n$ should be either
$m_5 = A_{2,q^2}m_4$ or $m'_5 = A^2_{2,q^2}m_4$.
Then, one can easily check that
the $2$-expansion with respect to $m_5$
generates $m_4$,
while the $2$-expansion with respect to $m'_5$
does not so.
Therefore, $n=m_5$. However,

\begin{lem}
The monomial $m_5$ does not occur in $\chi$ at any step
in the algorithm.
\end{lem}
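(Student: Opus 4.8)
The plan is to trace the algorithm backwards, using the single structural fact that the FM algorithm only ever \emph{multiplies} a monomial by products of inverse--root monomials $A^{-1}_{i,a}$ for one fixed $i$, and never by $A_{i,a}$. First I would record the unique factorization: by the algebraic independence of $\{A_{i,a}\}$, the monomial $m_5$ is written uniquely as $m_5=m_+A^{-1}_{1,q^3}A^{-1}_{2,q^2}A^{-1}_{3,1}$, which is exactly the definition (\ref{eq:m5}). Writing every occurring monomial uniquely as $m_+$ times a product of the $A^{-1}_{i,a}$, the exponent of each $A^{-1}_{i,a}$ is non-negative and can only increase as the algorithm runs. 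Consequently, if $m_5$ occurred it would be produced from some occurring monomial $n$ by an $i$-expansion with $m_5=n\cdot M_p$, and the factor-multiset of $n$ would be a sub-multiset of $\{A^{-1}_{1,q^3},A^{-1}_{2,q^2},A^{-1}_{3,1}\}$; iterating, every ancestor of $m_5$ lies among the monomials $m_+\cdot S$ with $S\subseteq\{A^{-1}_{1,q^3},A^{-1}_{2,q^2},A^{-1}_{3,1}\}$.

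Next I would isolate the decisive factor $A^{-1}_{1,q^3}$. Since it occurs in $m_5$ with exponent one, along any hypothetical production chain there is a first $1$-expansion that introduces it, applied to an occurring monomial $p$ whose $A^{-1}_{1,q^3}$-exponent is still zero. By the previous step $p=m_+\cdot S$ with $S\subseteq\{A^{-1}_{2,q^2},A^{-1}_{3,1}\}$, that is $p\in\{m_+,m_1,m_6,m_2\}$, and $p$ must be $1$-dominant for the expansion to act. A direct computation of the first projections (being careful that $A^{-1}_{2,q^2}$ contributes the factor $Y_{1,q^2}$ whereas $A^{-1}_{3,1}$ contributes nothing in the $1$st direction) gives $\overline{m_+}^{(1)}=\overline{m_1}^{(1)}=Y_{q^4}$ and $\overline{m_6}^{(1)}=\overline{m_2}^{(1)}=Y_{q^2}Y_{q^4}$; as $U_{q_1}(\hsl_2)$-highest weights with $q_1=q$ these are the single $q$-strings of $W_1(q^4)$ and $W_2(q^3)$.

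The heart of the argument is then the $U_{q_1}(\hsl_2)$-computation via (\ref{eq:qchW}). In $\chi_q(W_1(q^4))$ the only lowering is by $A^{-1}_{q^5}$, and in $\chi_q(W_2(q^3))$ the lowerings are by $A^{-1}_{q^5}$ and by $A^{-1}_{q^5}A^{-1}_{q^3}$; in neither does the factor $A^{-1}_{q^3}$ occur unaccompanied by $A^{-1}_{q^5}$. Translating back through (\ref{eq:sl2ch}), a $1$-expansion of any admissible $p\in\{m_+,m_1,m_6,m_2\}$ can introduce $A^{-1}_{1,q^3}$ only together with $A^{-1}_{1,q^5}$. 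But $m_5$ contains no factor $A^{-1}_{1,q^5}$, and by the monotonicity of the exponents such an unwanted factor can never be cancelled later. This contradiction shows that no chain of expansions produces $m_5$, so $m_5$ never occurs; this is precisely why the arrows labelled ``$1,q^3$'' are crossed out in Figure \ref{fig:C3}.

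I expect the main obstacle to be the bookkeeping of the first two paragraphs, namely giving a clean argument that the $A^{-1}_{i,a}$-exponents are monotone under the algorithm and that the relevant predecessors are exactly the four listed monomials (watching the asymmetric roles of $A^{-1}_{2,q^2}$ and $A^{-1}_{3,1}$ in the $1$st projection). Once that framework is in place, the final $\hsl_2$ check that $A^{-1}_{q^3}$ is never an available lowering without $A^{-1}_{q^5}$ is routine from (\ref{eq:qchW}).
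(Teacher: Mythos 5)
Your proof is correct, and the decisive computation is the same one the paper relies on: the $1$-expansions of the four possible predecessors $m_+,m_1,m_6,m_2$ produce only the factors $A^{-1}_{1,q^5}$ or $A^{-1}_{1,q^3}A^{-1}_{1,q^5}$, never $A^{-1}_{1,q^3}$ alone, while $m_5$ contains no $A^{-1}_{1,q^5}$. What differs is the organization. The paper enumerates the six orderings in which $A^{-1}_{1,q^3}$, $A^{-1}_{2,q^2}$, $A^{-1}_{3,1}$ could be applied and kills each route separately; in its route (iv) it even invokes a different obstruction, namely that the $3$-expansion of $m_6$ yields $A^{-1}_{3,q^4}$ and $A^{-1}_{3,1}A^{-1}_{3,q^4}$ rather than $A^{-1}_{3,1}$ alone. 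You instead make explicit the monotonicity of the $A^{-1}_{i,a}$-exponents along a production chain and isolate the \emph{first} step introducing $A^{-1}_{1,q^3}$; this collapses the six cases into one, makes the paper's check (iv) unnecessary, and is arguably more robust since it does not depend on which of $m_2$, $m_6$ are actually reachable. The two small points you should spell out are already within reach: the induction showing every occurring monomial equals $m_+$ times a product of $A^{-1}_{i,a}$'s (immediate from Definition \ref{defn:i-expansion}, since each newly added monomial has the form $mM_p$ with $M_p$ a product of $A^{-1}_{i,a}$'s for a single $i$), and the identification of $\overline{m_6}$, $\overline{m_2}$ with the single $q$-string $W_2(q^3)$ rather than two singleton strings in general position, which is what forces the $A^{-1}_{q^3}$ lowering to come only after $A^{-1}_{q^5}$ in (\ref{eq:qchW}).
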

\begin{proof}
By (\ref{eq:m5}),
there are six possible routes to obtain
$m_5$ from $m_+$ by 
 $i$-expansions:
(The  symbol $\overset{
i,q^{k}
}{\longrightarrow}$ represents the action of $A^{-1}_{i,q^k}$.)

\par
(i)  $m_+ \overset{
1,q^{3}
}{\longrightarrow}
\ast
\overset{
2,q^2
}{\longrightarrow}
\ast
\overset{
3,1
}{\longrightarrow}
m_5$.
The 1-expansion of $\chi$ with respect to $m_+$
gives $\mu=m_+(1 + A^{-1}_{1,q^5})$.
So, it does not happen.

\par
(ii)  $m_+ \overset{
1,q^{3}
}{\longrightarrow}
\ast
\overset{
3,1
}{\longrightarrow}
\ast
\overset{
2,q^2
}{\longrightarrow}
m_5$.
By the same reason as above, it does not happen.

(iii) $m_+ \overset{
2,q^{2}
}{\longrightarrow}
m_6
\overset{
1,q^{3}
}{\longrightarrow}
\ast
\overset{
3,1
}{\longrightarrow}
m_5$.
The $2$-expansion of $\chi$ with respect to $m_+$
gives $\mu=m_+(1 + A^{-1}_{2,q^2})$.
So, $m_6$ occurs in $\chi$.
Then, the 1-expansion of $\chi$ with respect to $m_6$
gives $\mu=m_6(1 + A^{-1}_{1,q^5}+ A^{-1}_{1,q^3}
A^{-1}_{1,q^5})$. So, it does not happen.

(iv) $m_+ \overset{
2,q^{2}
}{\longrightarrow}
m_6
\overset{
3,1
}{\longrightarrow}
m_2
\overset{
1,q^3
}{\longrightarrow}
m_5$.
The $3$-expansion of $\chi$ with respect to $m_6$
gives $\mu=m_6(1 + A^{-1}_{3,q^4}+ A^{-1}_{3,1}
A^{-1}_{3,q^4})$. So, it does not happen.

(v) $m_+ \overset{
3,1
}{\longrightarrow}
m_1
\overset{
1,q^{3}
}{\longrightarrow}
\ast
\overset{
2,q^2
}{\longrightarrow}
m_5$.
The $1$-expansion of $\chi$ with respect to $m_1$
gives $\mu=m_1(1 + A^{-1}_{1,q^5})$. So, it does not happen.

(vi) $m_+ \overset{
3,1
}{\longrightarrow}
m_1
\overset{
2,q^{2}
}{\longrightarrow}
m_2
\overset{
1,q^3
}{\longrightarrow}
m_5$.
The $1$-expansion of $\chi$ with respect to $m_2$
gives $\mu=m_2(1 + A^{-1}_{1,q^5}+ A^{-1}_{1,q^3}
A^{-1}_{1,q^5})$. So, it does not happen.

Therefore, $m_5$ does not occur in $\chi$ at any step.
\end{proof}

This completes the proof of Theorem \ref{thm:C3}.

Shortly speaking, the algorithm fails because it
fails to generate $m_5$ which is
an extra {\it dominant\/} monomial 
in $\chi_q(V(m_+))$.

It is not difficult to find some other examples where
similar phenomena happen.
For example,
it is a good exercise to check that,
if $\fg$ is of type $D_4$ and
the representation has the highest weight monomial
\begin{align}
m\left(
\raisebox{-15pt}
 {
{\setlength{\unitlength}{0.16mm}
\begin{picture}(50,75)
\multiput(0,0)(25,0){2}{\line(0,1){75}}
\put(50,50){\line(0,1){25}}
\put(0,0){\line(1,0){25}}
\put(0,25){\line(1,0){25}}
\multiput(0,50)(0,25){2}{\line(1,0){50}}
\put(8,56){$1$}
\put(33,56){$1$}
\put(8,31){$2$}
\put(8,6){$3$}
\end{picture}
}
}
\right)
=
 Y_{1,q^{2}}Y_{3,q^{-2}}Y_{4,q^{-2}},
\end{align}
the FM algorithm fails  at the monomial
\begin{align}
m\left(
\raisebox{-15pt}
 {
{\setlength{\unitlength}{0.16mm}
\begin{picture}(50,75)
\multiput(0,0)(25,0){2}{\line(0,1){75}}
\put(50,50){\line(0,1){25}}
\put(0,0){\line(1,0){25}}
\put(0,25){\line(1,0){25}}
\multiput(0,50)(0,25){2}{\line(1,0){50}}
\put(8,56){$1$}
\put(33,56){$1$}
\put(8,31){$3$}
\put(8,4){$\ol{1}$}
\end{picture}
}
}
\right)
=
Y_{1,1} Y^{-1}_{2,q}Y_{3,1}Y_{4,1},
\end{align}
where we use the diagrammatic notation
in \cite{FR,NN1,NN2}.

We conclude  with a remark on 
a modification of the FM algorithm.

\begin{rem}
\label{rem:concl}
Actually, in the counterexample above, the FM algorithm
{\it almost} works except for missing one monomial $m_5$.
It suggests the following {\it modification\/} of the algorithm:
when we encounter the non-admissible monomial
$m_4$ in the algorithm, one simply adds $m_5$
 (the `$2$-ancestor' of $m_4$) to $\chi$ with coloring $(0,0,0)$,
then restart the expansions from $\lambda=2\omega_2$.
Then, we have checked by computer that
the modified algorithm stops and certainly generates monomials
represented by 896 tableaux as expected
in \cite{NN1,NN3}.
For general representations, this {\it trace-back}
procedure is, {\it a priori}, not well-defined, because one cannot uniquely
determine the `$i$-ancestor' of a given monomial.
However, for the family of the skew diagram
representations of type $C_n$ in \cite{NN1,NN3},
one can do so {\it with help of tableaux representation (or, more conveniently, paths representation) }
of monomials.
Observe Figure \ref{fig:C3} as a simple example.
By modifying the FM algorithm with the trace-back
procedure, we expect that Conjecture \ref{conj:FMconjecture}
is true for these representations, and it is supported by our computer
experiment.
The detail will be published elsewhere  \cite{NN4}.
\end{rem}

\end{document}